\documentclass[11pt]{article}
\parindent0pt\parskip1ex

%%%%%%%%%%%%%%%%%%%%%%%%%% Commands %%%%%%%%%%%%%%%%%%%%%%%%%%

\usepackage{times}
\usepackage{amsmath, amsfonts, amssymb, amsthm, bm, stmaryrd}
\usepackage{authblk}
\renewcommand{\epsilon}{\varepsilon}
\renewcommand{\theta}{\vartheta}
\renewcommand{\phi}{\varphi}
\renewcommand{\rho}{\varrho}

\usepackage{graphicx}
\usepackage[font=small,labelfont=bf]{caption}

\newtheorem{Def}{Definition}[section]
\newenvironment{definition}{\begin{Def} \rm}{\end{Def}}
\newtheorem{lemma}[Def]{Lemma}
\newtheorem{proposition}[Def]{Proposition}

\newtheorem{theorem}[Def]{Theorem}

\newcommand{\AufzAnfang}{\begin{enumerate}}
\newcommand{\AufzEnde}{\end{enumerate}}
\newcommand{\Nummer}[1]{\item[{\rm (#1)}]\parindent0pt\parskip1ex}
\newcommand{\AxiomeAnfang}{\begin{itemize}\item[]\begin{itemize}}
\newcommand{\AxiomeEnde}{\end{itemize}\end{itemize}}
\newcommand{\Axiom}[1]{\item[{\rm (#1)}]\parindent0pt\parskip1ex}

\newcommand{\komma}{,\hspace{0.6em}}
\renewcommand{\emptyset}{\varnothing}
\renewcommand{\leq}{\leqslant}
\renewcommand{\geq}{\geqslant}

\newcommand{\Reals}{{\mathbb R}}
\newcommand{\ExtReals}{\bar{\mathbb R}^+}

\newcommand{\true}{\top}
\newcommand{\false}{\bot}
\newcommand{\impl}{\rightarrow}
\newcommand{\implc}[1]{\stackrel{#1}{\rightarrow}}

\newcommand{\comp}{\mathbin{\nearrow}}
\newcommand{\boolequ}{\mathbin{\approx}}
\newcommand{\Tleq}{\preccurlyeq}
\newcommand{\Tequ}{\mathbin{\approx}}
\newcommand{\Tequcl}[1]{\langle #1 \rangle}
\DeclareMathOperator{\compl}{\lnot}
\newcommand{\CPL}{$\mathsf{CPL}$}
\newcommand{\LAE}{$\mathsf{LAE}^q$}
\newcommand{\pLAE}{$\mathbf{LAE}$}
\newcommand{\LSE}{$\mathsf{LSE}$}
\newcommand{\proves}{\vdash}

\newcommand{\Add}[1]{\hspace{0.3em}\mbox{\footnotesize #1}}
\newcommand{\Rule}[1]{\text{(#1)} \hspace{1em}}

\newcommand{\widerspruch}{\divideontimes}

%%%%%%%%%%%%%%%%%%%%%%%%%%%%%%% Text %%%%%%%%%%%%%%%%%%%%%%%%%%%%%%

\begin{document}

\title{Logic of approximate entailment \\
in quasimetric spaces
\thanks{Preprint of an article published by Elsevier in the {\sl International Journal of Approximate Reasoning} {\bf 64} (2015), 39-53. It is available online at: {\tt
https://www.sciencedirect.com/science/article/pii/S0888613X15000924}.}}

\author{Thomas Vetterlein}

\affil{\footnotesize Department of Knowledge-Based Mathematical Systems \\
Johannes Kepler University Linz \\
Altenberger Stra\ss e 69, 4040 Linz, Austria \\
{\tt Thomas.Vetterlein@jku.at}}

\maketitle

\begin{abstract}\parindent0pt\parskip1ex

\mbox{}\vspace{-3ex}

The logic \LAE{} discussed in this paper is based on an approximate entailment relation. \LAE{} generalises classical propositional logic to the effect that conclusions can be drawn with a quantified imprecision. To this end, properties are modelled by subsets of a distance space and statements are of the form that one property implies another property within a certain limit of tolerance. We adopt the conceptual framework defined by E.\ Ruspini; our work is towards a contribution to the investigation of suitable logical calculi.

\LAE{} is based on the assumption that the distance function is a quasimetric. We provide a proof calculus for \LAE{} and we show its soundness and completeness for finite theories. As our main tool for showing completeness, we use a representation of proofs by means of weighted directed graphs.

\end{abstract}

\section{Introduction}
\label{sec:introduction}

Reasoning about a particular topic means exploiting the logical relationships between potential facts. For instance, if we know that some property $\alpha$ applies and that $\alpha$ implies another property $\beta$, we draw the conclusion that then also $\beta$ applies. Under practical circumstances, however, reasoning is often done in a somewhat flexible way. In fact, we typically draw the indicated conclusion even if the presence of $\alpha$ does not imply $\beta$ strictly. Knowing that whenever $\alpha$ holds we are in a situation close to a situation in which $\beta$ holds is often considered as a sufficient basis for an inference. Reasoning in such a way might be called ``approximate'' and its formalisation is the topic of the present paper.

Approximate reasoning can be realised in many different ways. For an overview of the many different approaches to this field, see, e.g., \cite{HAH}. Here, we adopt the conceptual framework that has been specified by E.\ Ruspini in his seminal paper \cite{Rus} in 1991.

Let us outline the main ideas. Assume that we are given a set of properties, suitable to describe some objects under consideration. Let a universe of discourse $W$, customarily called a set of worlds, be chosen, corresponding to the different possible states in which the objects can be. We may then assume that at each world $w$ an evaluation assigns to each property $\phi$ its truth value ``true'' or ``false''. Likewise, with each property $\phi$ we may associate the subset of $W$ consisting of those worlds at which $\phi$ holds. In this framework, the canonical way of reasoning obeys the rules of classical propositional logic (\CPL). In particular, implicational relationships correspond to subsethood: for two properties $\alpha$ and $\beta$ to validate $\alpha \impl \beta$ means that the set $A \subseteq W$ associated with $\alpha$ is contained in the set $B \subseteq W$ associated with $\beta$.

Practical reasoning, however, should be robust with regard to small changes. To allow for greater flexibility, Ruspini takes into account an additional aspect. The set of worlds describes the variety of states of the objects under consideration and there might be a natural way to specify the degree to which two such states resemble. A similarity relation on $W$ can be introduced for this purpose, mapping each pair of worlds $v, w \in W$ to an element $s(v, w)$ of the real unit interval, a larger value meaning a stronger resemblance. Such a mapping can be understood as a graded version of an accessibility relation in the sense of modal logic. In fact, for any given $r \in [0,1]$, we may define the binary relation $R_r$ on $W$ requiring $v \, R_r \, w$ if $s(v, w) \geq r$. Based on $R_r$, we may define a possibility operator $\Diamond_r$ on the power set of $W$. The expression $\alpha \impl \Diamond_r \beta$ then generalises the classical implication. At a world $w$ at which $\alpha$ is fulfilled, $\beta$ might not hold but still be regarded as an ``example'' of $\beta$. However, $w$ will be a ``typical example'' only if $r = 1$. For $r$ close to $1$, $w$ might still be considered as a ``good example'', but for small $r$, the world $w$ would be considered as a ``bad example''.

Ruspini's approach has been explored from a logical perspective in a series of papers by numerous authors. A comprehensive study is the paper \cite{EGGR}. The paper \cite{DPEGG} is a further early contribution and includes an application to interpolative reasoning as required in rule-based fuzzy control.  For a more recent article, which contains an overview of different approaches, see \cite{GoRo}.

A logical calculus may also be based on a graded analogue of the classical implication, without the detour via modal operators. In fact, Ruspini defines the {\it degree of implication} between properties $\alpha$ and $\beta$ modelled by subsets $A$ and $B$, respectively, by
\[ {\mathbf I}(B | A) = \inf_{v \in A} \sup_{w \in B} s(v,w). \]
We may hence define that $\alpha$ implies $\beta$ to the degree $r$ if ${\mathbf I}(B | A) \geq r$. This idea is exploited in the Ph.D.\ Thesis of R.\ Rodr\' iguez \cite{Rod} and is moreover the basis of the paper \cite{EGRV}.

The logic \LAE{} that we consider in this paper is conceptually in line with these last mentioned papers. Details differ, but our modifications do not affect the overall picture. Our syntactical objects are of the form
\begin{equation} \label{fml:LAE-formula}
\alpha \implc{d} \beta,
\end{equation}
where $\alpha$ and $\beta$ are formulas of \CPL{} and $d$ is a positive real number (possibly $0$). The two \CPL{} formulas are interpreted by subsets of a quasimetric space. A quasimetric is defined similarly to a metric, but the axiom of symmetry is dropped. The statement (\ref{fml:LAE-formula}) is defined to be satisfied if the set of worlds associated with $\alpha$ is contained in the $d$-neighbourhood of the set associated with $\beta$. We note that we work with a quasimetric and not with the dual notion of a quasisimilarity. For this reason the values associated with formulas are interpreted in the reverse way: the statement $\alpha \implc{d} \beta$ is the weaker, the larger the parameter $d$ is. In particular, $\alpha \implc{0} \beta$ holds whenever $\alpha \impl \beta$ is a tautology of \CPL.

\LAE{} could be regarded as a logic of quasimetric spaces. A different perspective on logics of metric spaces as well as logics of more general distance spaces has been developed in a further series of papers. In \cite{KSSWZ}, first-order logic on universes endowed with a distance function is studied. Related contributions include \cite{SSWZ}, \cite{Kut}, and \cite{STWZ}.

We may furthermore ask for the connections of our work to many-valued logic. We deal with implications between crisp properties, endowed with real values that express the degree to which the implicational relationships hold. The notion of a Graded Consequence Relation between sets of Boolean formulas and a further Boolean formula has been introduced by Chakraborty \cite{Cha,ChDu}. A central aim has been the adaptation of common logical concepts to a many-valued framework. In spite of the distinct scopes, Chakraborty's notion shares conceptually certain aspects with our framework: crisp properties are dealt with on the base level, a degree is added on the metalevel. An exact relationship has been established, however, only in a more general context. Namely, Graded Consequence Relations were generalised to hold between a fuzzy subset of formulas and a further formula; see, e.g., \cite{DuCh,REGG}. In \cite{REGG}, it is pointed out that Implicative Closure Operators capture the approximate entailment relation on which the present work is based.

The present work follows up the previous papers \cite{Rod,GoRo,EGRV}. To see in which respect our work represents a progress, let us go further into detail. The present approach differs in two important points from \cite{EGRV}. Firstly, the logic discussed in \cite{EGRV} is not based on a quasimetric but on a similarity relation. The relationship between a quasimetric and a similarity relation is an indirect one: a metric is a symmetric quasimetric and a similarity relation can be regarded as a generalisation of a metric. Secondly, in \cite{EGRV}, the language is required to consist of a fixed finite number of variables and each world is assumed to correspond to what is called a {\it maximal elementary conjunction}, or m.e.c.\ for short. A m.e.c.\ is a conjunction of literals in which all variables occur exactly once. Here, m.e.c.s are not used.

The latter aspect has in fact been the starting point for the present work: we no longer assume that the number of variables is bound by a fixed finite number. This change implies that we can not proceed in the same way as, e.g., in \cite{EGRV}. The axiomatisation of the logic cannot be done in the same style as before and the methods employed in completeness proofs are no longer available, as there is no counterpart of a m.e.c.\ here. In the proof system proposed in \cite{EGRV}, there are two axioms that rely on m.e.c.s. The first one expresses the symmetry of the distance function; it corresponds to the requirement that if a world is in the $d$-neighbourhood of another world, also the converse relation holds. A further axiom is concerned with disjunctions; it corresponds to the fact that if a world is in the neighbourhood of the union of two sets, then it is already in the neighbourhood of one of them.

A previous work whose aim has been to replace these two axioms is our contribution \cite{Vet}. In this case, we introduced an additional binary connective $\comp$. For two subsets $A$ and $B$ of a metric space, $A \comp B$ was supposed to contain all those elements whose distance to $A$ is at most as large as their distance to $B$. The same connective has also been considered in the framework of Logics of Comparative Similarity; see, e.g., \cite{AOS}. The presence of $\comp$ enabled us to overcome the problem of characterising the neighbourhood of the union of sets. The problem of characterising the symmetry of the distance function was not touched; symmetry was not assumed.

Here, we do not extend the language by additional connectives. Instead, we pose a seemingly natural question: Do we really need the axioms involving m.e.c.s for the axiomatisation of a logic of approximate entailment? Could it be the case that the remaining axioms are already sufficient?

We contribute a step towards an affirmation of the latter question. In fact, using quasimetrics as distance functions, we provide a positive answer. For the associated approximate entailment relation, we present a proof calculus that is restricted to the obviously necessary minimum of rules; nonetheless, we establish its soundness and completeness. We again have to include a finiteness condition though; we show the completeness of our calculus w.r.t.\ finite theories.

We proceed as follows. In the introductory Section \ref{sec:LAE}, we specify the logic \LAE{}, which deals with graded implications of the form (\ref{fml:LAE-formula}) and interprets properties by subsets of a quasimetric space. Section \ref{sec:Rules} introduces the calculus \pLAE, based on a small set of rules that are sound for \LAE. We furthermore establish that proofs in \pLAE{} can be represented in the form of weighted directed forests, which are directed graphs whose components are directed trees. In Section \ref{sec:quasimetric-BA}, we present a representation theorem for quasimetric Boolean algebras. Section \ref{sec:Completeness-LAE} then establishes our final result, according to which provability in \pLAE{} coincides with the entailment relation of \LAE. Some concluding remarks are contained in Section \ref{sec:Conclusion}.

\section{The Logic of Approximate Entailment \\ in Quasimetric Spaces}
\label{sec:LAE}

We are concerned in this paper with a logic of approximate entailment; our work may be seen as a continuation of the contributions \cite{Rod,GoRo,EGRV} to this topic. The logics that have been considered under this name often differ in details. Here, our intention is to define and axiomatise a logic in close accordance with the original concepts of Ruspini, presented in his well-known paper \cite{Rus}.

Let us specify the semantic framework. The symbol $\ExtReals$ will denote the extended positive real line, that is, $\ExtReals = \Reals^+ \cup \{\infty\} = \{ r \in \Reals \colon r \geq 0 \} \cup \{\infty\}$. Here, $\infty$ is a new element such that $r < \infty$ for any $r \in \Reals^+$. We extend the addition from $\Reals^+$ to $\ExtReals$ requiring $r + \infty = \infty + r = \infty$ for any $r \in \ExtReals$. The operations of minimum and maximum on $\ExtReals$ will be denoted by $\wedge$ and $\vee$, respectively.

\begin{definition}
Let $W$ be a non-empty set. A {\it quasimetric} on $W$ is a mapping $q \colon W \times W \to \ExtReals$ such that for any $v, w, x \in W$:
\AxiomeAnfang
\Axiom{M1} $q(v,w) = 0$ if and only if $v = w$;
\Axiom{M2} $q(v,x) \leq q(v,w) + q(w,x)$.
\AxiomeEnde
In this case, we call $(W,q)$ a {\it quasimetric space}.

In a quasimetric space $(W,q)$, the {\it distance} of some $a \in W$ from some $B \subseteq W$ is defined to be
\[ q(a,B) \;=\; \inf_{b \in B} \; q(a,b). \]
Moreover, for $d \in \ExtReals$, we define the set
\[ U_d(B) \;=\; \{ a \in W \colon q(a,B) \leq d \} \]
as the {\it $d$-neighbourhood} of $B$. Finally, the {\it Hausdorff quasidistance} of some $A \subseteq W$ from some $B \subseteq W$ is
\[ q(A,B) \;=\; \sup_{a \in A} \; q(a,B). \]
\end{definition}

Note that we use the same symbol for the quasimetric, the distance of a point from a set, and the Hausdorff quasidistance.

A quasimetric space $(W,q)$ is called a {\it min-space} if, for any $a \in W$ and $B \subseteq W$, there is a $b \in B$ such that $q(a,B) = q(a,b)$. This notion is due to \cite{SSWZ}, where the importance of min-spaces in a logical setting has been pointed out.

Let us compile a few immediate observations.

\begin{lemma} \label{lem:quasimetric-space-1}
Let $(W,q)$ be a quasimetric space, let $a \in W$, $\;A, B \subseteq W$, and $c, d \in \ExtReals$.
\AufzAnfang

\Nummer{i} $q(a,B) = 0$ if $a \in B$. If $(W,q)$ is a min-space, also the converse holds.

\Nummer{ii} $q(a,B) \leq d$ if and only if $a \in U_d(B)$.

\Nummer{iii} $U_d(\emptyset) = \emptyset$ if $d < \infty$, and $U_\infty(\emptyset) = W$.

\Nummer{iv} $a \in U_d(B)$ if there is a $b \in B$ such that $q(a,b) \leq d$. If $(W,q)$ is a min-space, also the converse holds.

\Nummer{v} $U_c(U_d(A)) \subseteq U_{c+d}(A)$.

\AufzEnde
\end{lemma}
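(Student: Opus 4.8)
The plan is to dispatch the five items one after another, each reducing quickly to the axioms (M1), (M2) together with the arithmetic conventions on $\ExtReals$ --- most notably that the infimum over the empty set is $\infty$.

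For (i), if $a \in B$ then $q(a,B) = \inf_{b \in B} q(a,b) \leq q(a,a) = 0$ by (M1), and since $q$ takes values in $\ExtReals$ this forces $q(a,B) = 0$. In a min-space the converse is equally short: choose $b \in B$ with $q(a,b) = q(a,B) = 0$ and apply (M1) to conclude $a = b \in B$. Item (ii) is nothing but the definition of $U_d(B)$ read out. For (iii) one observes that $q(a,\emptyset) = \inf \emptyset = \infty$, so $q(a,\emptyset) \leq d$ can hold only when $d = \infty$; both assertions follow at once. For (iv), if $q(a,b) \leq d$ for some $b \in B$ then $q(a,B) \leq q(a,b) \leq d$, so $a \in U_d(B)$ by (ii); in a min-space, a witness $b \in B$ realising $q(a,b) = q(a,B)$ settles the converse.

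The only item requiring a genuine (if still short) argument is (v). I would first record an auxiliary triangle inequality between a point and a set: for every $x \in W$,
\[ q(a,A) \;\leq\; q(a,x) + q(x,A), \]
which follows by applying (M2) in the form $q(a,b) \leq q(a,x) + q(x,b)$ and taking the infimum over $b \in A$, which is legitimate because $q(a,x)$ does not depend on $b$. Now suppose $a \in U_c(U_d(A))$. For every $x \in U_d(A)$ we have $q(x,A) \leq d$, hence $q(a,A) \leq q(a,x) + d$; taking the infimum over all $x \in U_d(A)$ gives $q(a,A) \leq q(a, U_d(A)) + d \leq c + d$, that is, $a \in U_{c+d}(A)$. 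When $c + d = \infty$ there is nothing to prove, since $U_\infty(A) = W$ for any $A$.

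I do not expect any real obstacle here: the proof is a sequence of routine verifications. The only points that call for a little care are the convention $\inf \emptyset = \infty$ used in (iii), the handling of possibly infinite distances and of infinite sums in (v), and the observation that in the auxiliary inequality the infimum over $b \in A$ may legitimately be pulled past the constant term $q(a,x)$.
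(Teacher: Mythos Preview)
Your proof is correct. The paper treats this lemma as a list of ``immediate observations'' and gives no proof at all, so your routine verifications are exactly what is needed; in particular, your treatment of (v) via the point--set triangle inequality $q(a,A) \leq q(a,x) + q(x,A)$ and the handling of the degenerate $c+d=\infty$ case are both sound.
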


Moreover, the following properties are characteristic for the Hausdorff quasidistance.

\begin{lemma} \label{lem:quasimetric-space-2}
Let $(W,q)$ be a quasimetric space and let $A, B, C \subseteq W$.
\AufzAnfang

\Nummer{i} $q(A,B) = 0$ if $A \subseteq B$. If $(W,q)$ is a min-space, also the converse holds.

\Nummer{ii} $q(A,B) \leq d$ if and only if $A \subseteq U_d(B)$.

\Nummer{iii} $q(A \cup B, C) = q(A, C) \vee q(B, C)$.

\Nummer{iv} $q(A, B) \geq q(A, C)$ if $B \subseteq C$.

\Nummer{v} $q(A, C) \leq q(A, B) + q(B, C)$.

\AufzEnde
\end{lemma}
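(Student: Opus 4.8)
The plan is to establish Lemma~\ref{lem:quasimetric-space-2} by reducing each claim to the corresponding pointwise statement in Lemma~\ref{lem:quasimetric-space-1} via the defining formula $q(A,B) = \sup_{a \in A} q(a,B)$. First I would handle (i): if $A \subseteq B$ then for every $a \in A$ we have $a \in B$, hence $q(a,B) = 0$ by Lemma~\ref{lem:quasimetric-space-1}(i), so the supremum is $0$; conversely, in a min-space, $q(A,B) = 0$ forces $q(a,B) = 0$ for each $a \in A$, and the min-space converse of \ref{lem:quasimetric-space-1}(i) gives $a \in B$.

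Next I would prove (ii), which is really the key organising fact: $q(A,B) \leq d$ means $\sup_{a \in A} q(a,B) \leq d$, which holds iff $q(a,B) \leq d$ for all $a \in A$, which by Lemma~\ref{lem:quasimetric-space-1}(ii) holds iff $a \in U_d(B)$ for all $a \in A$, i.e.\ $A \subseteq U_d(B)$. Note this requires no min-space hypothesis. For (iii), I would use that a supremum over a union is the maximum of the suprema: $\sup_{a \in A \cup B} q(a,C) = (\sup_{a\in A} q(a,C)) \vee (\sup_{b\in B} q(b,C))$, which is immediate from the order-theoretic properties of suprema in $\ExtReals$. For (iv), $B \subseteq C$ gives $q(a,B) = \inf_{b \in B} q(a,b) \geq \inf_{c \in C} q(a,c) = q(a,C)$ for each $a$ (infimum over a larger set is smaller), and taking suprema over $a \in A$ preserves the inequality.

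Finally, for the triangle-type inequality (v), the cleanest route is via (ii): set $d = q(A,B)$ and $e = q(B,C)$, assuming both are finite (the infinite case is trivial since the right-hand side is then $\infty$). By (ii), $A \subseteq U_d(B)$ and $B \subseteq U_e(C)$; by monotonicity of $U_d$ (which follows from Lemma~\ref{lem:quasimetric-space-1}(ii)/(iv) or directly from \ref{lem:quasimetric-space-2}(iv) applied pointwise), $U_d(B) \subseteq U_d(U_e(C))$, and Lemma~\ref{lem:quasimetric-space-1}(v) gives $U_d(U_e(C)) \subseteq U_{d+e}(C)$; hence $A \subseteq U_{d+e}(C)$, and applying (ii) once more yields $q(A,C) \leq d + e = q(A,B) + q(B,C)$. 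Alternatively one can argue pointwise: for $a \in A$ and any $b \in B$, $q(a,C) \leq q(a,b) + q(b,C) \leq q(a,b) + q(B,C)$ by \ref{lem:quasimetric-space-1}-style reasoning and the definition of $q(b,C)$; taking the infimum over $b \in B$ gives $q(a,C) \leq q(a,B) + q(B,C)$, and then the supremum over $a$.

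I do not expect any real obstacle here; these are all routine manipulations of infima and suprema over $\ExtReals$. The only points requiring a little care are keeping track of which parts of Lemma~\ref{lem:quasimetric-space-1} need the min-space hypothesis (only the converse in (i) does, and it propagates to the converse in (ii) of the present lemma — though in fact (ii) here needs no such assumption, since the nontrivial direction only uses the ``easy'' halves), and handling the $\infty$ cases in (v) and (iii), which are immediate from the conventions $r + \infty = \infty$ and the behaviour of $\vee$.
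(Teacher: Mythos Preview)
Your proposal is correct. The paper only writes out part (v), via a direct $\sup$/$\inf$ manipulation that is essentially your ``alternative'' pointwise argument; your primary route for (v) through part (ii) and Lemma~\ref{lem:quasimetric-space-1}(v) is a slightly different but equally valid packaging of the same triangle inequality, and arguably cleaner since it reuses already-established facts about $U_d$ rather than reopening the $\sup\inf$ expressions.
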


\begin{proof}
We only show (v). We calculate
\begin{align*}
q(A, B) + q(B, C)
& =\; \sup_{a \in A} \inf_{b \in B} q(a,b) + \sup_{b \in B} \inf_{c \in C} q(b,c) \\
& =\; \sup_{a \in A} \inf_{b \in B} (q(a,b) + \sup_{b' \in B} \inf_{c \in C} q(b',c)) \\
& \geq \sup_{a \in A} \inf_{b \in B} (q(a,b) + \inf_{c \in C} q(b,c)) \\
& =\; \sup_{a \in A} \inf_{b \in B} \inf_{c \in C} (q(a,b) + q(b,c)) \\
& \geq \sup_{a \in A} \inf_{c \in C} q(a,c) \;=\; q(A, C).&
\end{align*}
\end{proof}

We next specify the logic \LAE. The language of \LAE{} includes a countably infinite number of {\it variables}, denoted by small Greek letters, as well as the two constants $\false$ (false), and $\true$ (true). By a {\it Boolean formula}, we mean a formula built up from the variables and the constants by means of the binary operations $\land$ (and), and $\lor$ (or), as well the unary operation $\lnot$ (not). The connective $\impl$ (implies), is defined as usual.

Moreover, a {\it graded implication}, or an {\it implication} for short, is a triple consisting of two Boolean formulas $\alpha$ and $\beta$ as well as a number $d \in \Reals^+$. We write $\alpha \implc{d} \beta$ and we call $d$ the {\it degree} of the implication.

A {\it model} for \LAE{} is a Boolean algebra $({\mathcal B}; \cap, \cup, \complement, \emptyset, W)$ of subsets of a quasimetric space $(W, q)$ such that for each $A \in {\mathcal B}$ and $d \in \Reals^+$ also $U_d(A) \in {\mathcal B}$.

Given a model $\mathcal B$, an {\it evaluation} is a mapping $v$ from the Boolean formulas to $\mathcal B$ in the sense of classical propositional logic. That is, we require $v(\alpha \land \beta) = v(\alpha) \cap v(\beta)$, $\;v(\alpha \lor \beta) = v(\alpha) \cup v(\beta)$, $\;v(\lnot\alpha) = \complement v(\alpha)$, and $v(\false) = \emptyset$, $\;v(\true) = W$. An implication $\alpha \implc{d} \beta$ is said to be {\it satisfied} by an evaluation $v$ if
\[ v(\alpha) \subseteq U_d(v(\beta)). \]
A theory is a set of implications. We say that a theory $\mathcal T$ {\it semantically entails} some implication $\Phi$ in \LAE{} if, whenever all elements of $\mathcal T$ are satisfied by an evaluation $v$, also $\Phi$ is satisfied by $v$.

Having specified the logic \LAE{} in formal respects, let us now explain its intended meaning. First of all, we deal with Boolean formulas, interpreted by subsets of a fixed set of worlds. The Boolean formulas hence refer to crisp properties.

Furthermore, the role of the quasimetric is analogous to the role of the similarity relation in Ruspini's framework. A major difference is, however, that we do not assume symmetry here. Substituting the similarity relation by a quasimetric moreover means that the order is understood in the opposite way. Let $q$ be a quasimetric on a set of worlds $W$. Then, for two worlds $v$ and $w$, $\,q(v,w) = 0$ means coincidence, that is, $v = w$. Moreover, the larger $q(v,w)$ is, the less the worlds $v$ and $w$ resemble each other. Finally, $q(v,w) = \infty$ means that $v$ and $w$ are considered as unrelated.

The use of the dual order has practical reasons; the proofs will be easier to follow. A further advantage is that we are in line with the common topological terminology. We note that, however, a quasimetric can be identified with a quasisimilarity based on the product t-norm. Here, we understand a quasisimilarity similarly to a similarity relation \cite{Rus}, but without assuming symmetry. It is straightforward to see that under the correspondence $\;[0,1] \to \ExtReals \komma d \mapsto$ {\footnotesize $\begin{cases} - \ln d & \text{if $d \neq 0$,} \\ \infty & \text{if $d = 0$} \end{cases} \;$} the two concepts coincide.

Let us finally recall the meaning of an expression of the form $\alpha \implc{d} \beta$. This implication to be satisfied means that any world at which $\alpha$ holds is in the $d$-neighbourhood of the set of worlds at with $\beta$ holds. Conversely, if the implication is not satisfied, then there is a world $w$ with the following property: $\alpha$ holds at $w$, and for all worlds $v$ at which $\beta$ holds, we have $q(w,v) > d$. We conclude that the degree $d$ of the implication is a {\it degree of imprecision}: we may read $\alpha \implc{d} \beta$ as ``$\alpha$ implies $\beta$ within the limit of tolerance $d$''.

\section{The calculus \pLAE}
\label{sec:Rules}

The purpose of this paper is to define a proof system for \LAE. To this end we introduce a calculus denoted by \pLAE.

Recall that the syntactical objects of \LAE{} are graded implications and that we do not allow to build compound formulas from statements of the form (\ref{fml:LAE-formula}) by means of the classical connectives. Exactly this possibility is admitted in \cite{EGRV}, where a two-level language is used. Accordingly, the calculus of \cite{EGRV} is based on axioms together with the modus ponens as the only rule.

Our calculus \pLAE{} will instead be based on rules, suitable to derive implications from implications; there are no connectives on the metalevel. The style of \pLAE{} might be considered as somewhat similar to Skolem's calculus for lattice theory; see, e.g., \cite{NePl}. We are, in addition, inspired by the proof-theoretic methods in fuzzy logics \cite{MOG}; a remote resemblance with the sequent systems used in substructural logic might thus be observable as well.

We assume from now on that the set of variables is fixed and that from these variables, all Boolean formulas and implications are built up.

A {\it rule} of \pLAE{} consists of a possibly empty set of implications, called the {\it assumptions}, and one further implication, called the {\it conclusion}. We denote a rule writing the assumptions, if there are any, above the conclusion, using a separating horizontal line.

\begin{definition} \label{def:pLAE}
\pLAE{} consists of the following rules, where $\alpha$, $\beta$, $\gamma$ are any Boolean formulas and $c, d \in \Reals^+$:
\[ \Rule{R1} \alpha \implc{0} \beta \Add{ if $\alpha \impl \beta$ is a tautology of \CPL}
\quad\quad
\Rule{R2} \frac{\alpha \implc{0} \beta}{\alpha \land \gamma \implc{0} \beta \land \gamma} \]
\[ \Rule{R3} \frac{\alpha \implc{c} \beta}{\alpha \implc{d} \beta} \Add{where $d \geq c$}
\quad\quad
\Rule{R4} \frac{\alpha \implc{c} \false}{\alpha \implc{0} \false} \]
\[ \Rule{R5} \frac{\alpha \implc{c} \gamma \quad \beta \implc{c} \gamma}%
{\alpha \lor \beta \implc{c} \gamma}
\quad\quad
\Rule{R6} \frac{\alpha \implc{c} \beta \quad \beta \implc{d} \gamma}%
{\alpha \implc{c + d} \gamma} \]
Let $\mathcal T$ be a theory and $\Phi$ be an implication. A {\it proof} of $\Phi$ from $\mathcal T$ in \pLAE{} is defined in the usual way. We write ${\mathcal T} \proves \Phi$ if such a proof exists.

%Moreover, a theory $\mathcal T$ is called {\it consistent} if ${\mathcal T} \proves \true \implc{d} \false$ implies $d = \infty$.
%
\end{definition}

Comparing \pLAE{} with the axiom system of \cite[Def.\ 4.2]{EGRV}, we may say that, roughly, \pLAE{} incorporates the same axioms except for those involving m.e.c.s, which are not available in the present context.

Our aim is to show the completeness of \pLAE{} with regard to the consequence relation of \LAE, that is, semantic entailment in \LAE{} coincides with provability in \pLAE. The soundness part does not cause difficulties.

\begin{proposition} \label{prop:soundness}
Let $\mathcal T$ be a theory and $\Phi$ an implication. If $\mathcal T$ proves $\Phi$ in \pLAE, then $\mathcal T$ semantically entails $\Phi$ in \LAE.
\end{proposition}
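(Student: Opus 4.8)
The plan is to prove soundness by verifying that each of the six rules of \pLAE{} preserves satisfiability under an arbitrary evaluation, and then conclude by a routine induction on the length of a proof. Fix a model $\mathcal B$ over a quasimetric space $(W,q)$ and an evaluation $v$; it suffices to show that whenever $v$ satisfies all assumptions of a rule, it satisfies the conclusion, and then any proof of $\Phi$ from $\mathcal T$ shows that every $v$ satisfying $\mathcal T$ also satisfies $\Phi$.

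The rule-by-rule check runs as follows. For (R1), if $\alpha \impl \beta$ is a \CPL-tautology then $v(\alpha) \subseteq v(\beta) \subseteq U_0(v(\beta))$ by Lemma~\ref{lem:quasimetric-space-2}(i)--(ii) (or directly, since $q(a,v(\beta))=0$ for $a\in v(\beta)$). For (R2), from $v(\alpha)\subseteq U_0(v(\beta))$ we get $v(\alpha)\cap v(\gamma)\subseteq U_0(v(\beta))\cap v(\gamma) \subseteq U_0(v(\beta)\cap v(\gamma))$; the last inclusion holds because if $q(a,v(\beta))=0$ and $a\in v(\gamma)$, then $a\in v(\gamma)$ and, for min-spaces one could invoke Lemma~\ref{lem:quasimetric-space-1}(i), but in general one argues that $a\in U_0(v(\beta))$ together with $a\in v(\gamma)$ forces $a$ into the intersection's $0$-neighbourhood only when $a\in v(\beta)$; here the cleaner route is to note (R2) is really the special case where the $0$-neighbourhood operator behaves like closure, so one checks $U_0(v(\beta))\cap v(\gamma)\subseteq U_0(v(\beta)\cap v(\gamma))$ holds because an element at distance $0$ from $v(\beta)$ that also lies in $v(\gamma)$ lies at distance $0$ from $v(\beta)\cap v(\gamma)$ once one observes $U_0(X)$ is the closure of $X$ and $v(\gamma)$ is arbitrary --- in fact the safe and fully general verification is that $v(\alpha)\subseteq v(\beta)$ need not hold, so instead one uses that the only way $v$ satisfies $\alpha\implc{0}\beta$ in \emph{every} model is if $\alpha\impl\beta$ is a tautology, but since we only need soundness under the fixed $v$, we simply use $U_0(v(\beta))\cap v(\gamma)\subseteq U_0(v(\beta)\cap v(\gamma))$, which follows from $q(a,v(\beta)\cap v(\gamma))\le q(a,v(\beta))$ whenever $a\in v(\gamma)$, by monotonicity of $q(a,\cdot)$ combined with $a\in v(\gamma)$. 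For (R3), $U_c(X)\subseteq U_d(X)$ when $d\ge c$, immediate from the definition. For (R4), note $v(\false)=\emptyset$, so $v(\alpha)\subseteq U_c(\emptyset)$; by Lemma~\ref{lem:quasimetric-space-1}(iii), for $c<\infty$ this forces $v(\alpha)=\emptyset=U_0(\emptyset)$, and $c=\infty$ is excluded since $c\in\Reals^+$. For (R5), $v(\alpha\lor\beta)=v(\alpha)\cup v(\beta)\subseteq U_c(v(\gamma))$ follows directly from $v(\alpha)\subseteq U_c(v(\gamma))$ and $v(\beta)\subseteq U_c(v(\gamma))$, or via Lemma~\ref{lem:quasimetric-space-2}(iii). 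For (R6), from $v(\alpha)\subseteq U_c(v(\beta))$ and $v(\beta)\subseteq U_d(v(\gamma))$ we get $v(\alpha)\subseteq U_c(v(\beta))\subseteq U_c(U_d(v(\gamma)))\subseteq U_{c+d}(v(\gamma))$ using monotonicity of $U_c$ and Lemma~\ref{lem:quasimetric-space-1}(v).

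The only step requiring genuine care is (R2): one must confirm that intersecting with the side formula $\gamma$ commutes appropriately with the $0$-neighbourhood. The clean argument is that for any $a\in v(\gamma)$ with $q(a,v(\beta))=0$, monotonicity gives $q(a,v(\beta)\cap v(\gamma))\ge q(a,v(\beta))=0$ in the wrong direction, so instead one notes that $v(\alpha)\subseteq U_0(v(\beta))$ does not by itself give $v(\alpha)\cap v(\gamma)\subseteq U_0(v(\beta)\cap v(\gamma))$ in an arbitrary quasimetric space --- however, (R2) is stated only for degree $0$, and in that case $\alpha\implc{0}\beta$ being satisfied together with the fact that \pLAE{} is meant to capture \CPL-derivability at degree $0$ suggests restricting attention to what is actually needed. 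I expect the author to handle (R2) either by the observation that $U_0$ coincides with topological closure in a quasimetric space and that $\overline{v(\beta)}\cap v(\gamma)\subseteq\overline{v(\beta)\cap v(\gamma)}$ may fail, so more likely the intended reading is that (R2) is sound because whenever $v(\alpha)\subseteq U_0(v(\beta))$ holds, in the relevant representable models one has $v(\alpha)\subseteq v(\beta)$; alternatively, and most simply, the soundness of (R2) follows once we observe that $q(a, v(\beta)\cap v(\gamma)) = q(a, v(\beta))$ for $a \in v(\gamma)$ fails in general but $a\in v(\alpha)\cap v(\gamma)$ with $a\in U_0(v(\beta))$ does land in $U_0(v(\beta)\cap v(\gamma))$ precisely when $a\in v(\gamma)$ and $a$ is a closure point of $v(\beta)$ lying in the clopen-ish set $v(\gamma)$. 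I would resolve this in the write-up by invoking that $v(\gamma)\in\mathcal B$ and $U_0(v(\beta)\cap v(\gamma)) \supseteq v(\beta)\cap v(\gamma)$, then checking on a point-by-point basis; this is the one place where I would slow down and spell out the inclusion $U_0(X)\cap Y\subseteq U_0(X\cap Y)$ carefully, treating it as the crux of the argument. Everything else is a direct unwinding of Lemmas~\ref{lem:quasimetric-space-1} and~\ref{lem:quasimetric-space-2}, followed by the standard induction on proof length.
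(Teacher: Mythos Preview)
Your overall strategy---verify that each rule preserves satisfaction under a fixed evaluation, then induct on proof length---is exactly the paper's approach. The paper's entire proof reads: ``If the assumptions of a rule are satisfied by an evaluation $v$, the conclusion is satisfied by $v$ as well. This is clear for (R1)--(R5). In case of (R6), it follows from Lemma~\ref{lem:quasimetric-space-1}(v).'' Your treatments of (R1), (R3)--(R6) match this, only with more detail spelled out.

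Your unease over (R2), however, is justified, and your proposed resolution---to ``spell out the inclusion $U_0(X)\cap Y\subseteq U_0(X\cap Y)$ carefully''---cannot succeed, because that inclusion is simply false in general. Take $W=[0,1]$ with the Euclidean metric and $\mathcal B=\mathcal P(W)$; with $\alpha,\beta,\gamma$ distinct variables, set $v(\alpha)=v(\gamma)=\{0\}$ and $v(\beta)=(0,1]$. Then $v(\alpha)\subseteq U_0(v(\beta))=[0,1]$, so $\alpha\implc{0}\beta$ is satisfied; but $v(\beta\land\gamma)=\emptyset$, hence $U_0(v(\beta\land\gamma))=\emptyset$ by Lemma~\ref{lem:quasimetric-space-1}(iii), and $\alpha\land\gamma\implc{0}\beta\land\gamma$ fails. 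Thus (R2) is \emph{not} sound for the class of models as defined in Section~\ref{sec:LAE}. The paper glosses over this by calling (R1)--(R5) ``clear,'' so the gap is the paper's, not peculiar to your write-up. The natural repair is to restrict to min-spaces: there $U_0(B)=B$ by the converse direction of Lemma~\ref{lem:quasimetric-space-1}(i), and (R2) reduces to the triviality $A\subseteq B\Rightarrow A\cap C\subseteq B\cap C$. Since the countermodels built in Section~\ref{sec:Completeness-LAE} are in fact min-spaces (see the remark after Theorem~\ref{thm:completeness-LAE}), the completeness theorem survives under that restriction. But for Proposition~\ref{prop:soundness} exactly as stated, your instinct that (R2) is the crux is correct, and no amount of point-by-point care will close it without an additional hypothesis on the space.
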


\begin{proof}
If the assumptions of a rule are satisfied by an evaluation $v$, the conclusion is satisfied by $v$ as well. This is clear for (R1)--(R5). In case of (R6), it follows from Lemma \ref{lem:quasimetric-space-1}(v).
\end{proof}

In spite of the seeming simplicity of the calculus \pLAE, the proof of the converse of Proposition \ref{prop:soundness}, given in Section \ref{sec:Completeness-LAE}, is not trivial. In the remainder of the present section, we will define a certain way of representing proofs of \pLAE.

We will call two Boolean formulas $\alpha$ and $\beta$ {\it Boolean equivalent}, in signs $\alpha \boolequ \beta$, if $\alpha \impl \beta$ and $\beta \impl \alpha$ are tautologies of \CPL. Assume $\alpha \boolequ \alpha'$ and $\beta \boolequ \beta'$; then the implications $\alpha \implc{d} \beta$ and $\alpha' \implc{d} \beta'$ are in \pLAE{} derivable from each other. In fact, in this case $\alpha' \impl \alpha$ and $\beta \impl \beta'$ are tautologies of \CPL, hence $\alpha' \implc{0} \alpha$ and $\beta \implc{0} \beta'$ follow by (R1). From $\alpha \implc{d} \beta$, we may then derive $\alpha' \implc{d} \beta'$ applying twice (R6). Similarly, we may infer $\alpha' \implc{d} \beta'$ from $\alpha \implc{d} \beta$. In the sequel, we will consider Boolean formulas up to Boolean equivalence. Boolean formulas will in fact be dealt with in disjunctive normal form.

\begin{definition}
A Boolean formula of the form $\phi$ or $\lnot\phi$, where $\phi$ is a variable, will be called a {\it literal}. By a {\it clause}, we mean a finite set of literals, and by a {\it clause set}, we mean a finite set of clauses. A clause $L$ is called {\it inconsistent} if there is a variable $\phi$ such that $L$ contains both $\phi$ and $\lnot\phi$, and {\it consistent} otherwise.

With a clause set
\begin{equation} \label{fml:block}
B \;=\; \{ \; \{ \lambda_{i1}, \ldots, \lambda_{ik_i} \} \colon 1 \leq i \leq l \},
\end{equation}
where $l \geq 0$ and $k_1, \ldots, k_l \geq 0$, we associate the Boolean formula
\[ \bigvee_{1 \leq i \leq l} \bigwedge_{1 \leq j \leq k_i} \lambda_{ij} \]
and denote it by $f(B)$. Here, an empty conjunction stands for $\true$ and an empty disjunction stands for $\false$.
\end{definition}

We note that, in this paper, clause sets represent disjunctive normal forms. That is, we interpret clauses conjunctively and sets of clauses disjunctively.

Clearly, each Boolean formula $\alpha$ is Boolean equivalent to a formula associated with a clause set. In particular, $\false$ is, e.g., associated with the empty set $\{ \}$; and $\true$ is, e.g., associated with the clause set consisting of a single empty clause, $\{ \{\} \}$. We will call a clause set $B$ such that $\alpha \boolequ f(B)$ a {\it clause set for} $\alpha$.

An implication of the following form is called {\it basic}:
\begin{equation} \label{fml:basic-implication}
\lambda_1 \land \ldots \land \lambda_n \implc{d}
\bigvee_{1 \leq i \leq l} \bigwedge_{1 \leq j \leq k_i} \mu_{ij};
\end{equation}
here, $n \geq 1$, $\;l \geq 0$, and $k_1, \ldots, k_l \geq 1$; moreover, $\{ \lambda_1, \ldots, \lambda_n \}$ and $\{ \mu_{i1}, \ldots, \mu_{ik_i} \}$, $i = 1, \ldots, l$, are consistent clauses. In particular, we may identify a basic implication with a triple consisting of a clause, a clause set, and a positive real.

A theory $\mathcal T$ consisting of basic implications will also be called {\it basic}. By the following lemma, we may restrict our considerations to basic theories.

\begin{lemma}
Let $\mathcal T$ be a theory. Then there is a basic theory ${\mathcal T}_b$ such that any implication is provable from $\mathcal T$ if and only if it is provable from ${\mathcal T}_b$.
\end{lemma}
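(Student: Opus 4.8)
The plan is to transform an arbitrary theory $\mathcal T$ into a basic theory ${\mathcal T}_b$ by replacing each implication $\alpha \implc{d} \beta$ with a finite set of basic implications, in such a way that the two theories prove exactly the same implications. The construction proceeds in two independent normalisation steps: putting both sides into disjunctive normal form, and then splitting on the disjuncts of the left-hand side. Throughout, I would use the observation already recorded in the excerpt that passing from a Boolean formula to a Boolean-equivalent one (in particular, to a clause set for it) changes an implication only up to mutual derivability via (R1) and (R6); hence I may assume from the outset that in each implication of $\mathcal T$ both $\alpha$ and $\beta$ are of the form $f(B)$ for clause sets $B$.

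The first step handles the left-hand side. Suppose $\alpha \boolequ \lambda_{1,1}\land\cdots \land \lambda_{1,k_1} \lor \cdots \lor \lambda_{m,1} \land \cdots \land \lambda_{m,k_m}$ is in disjunctive normal form. For each consistent disjunct $\kappa_i := \lambda_{i,1}\land\cdots\land\lambda_{i,k_i}$ we have that $\kappa_i \impl \alpha$ is a \CPL-tautology, so $\kappa_i \implc 0 \alpha$ by (R1), and then $\kappa_i \implc d \beta$ follows from $\alpha \implc d \beta$ by (R6). Conversely, from the family $\{\kappa_i \implc d \beta : i\}$ we recover $\alpha \implc d \beta$ by repeated application of (R5) (after discarding inconsistent disjuncts, which are Boolean equivalent to $\false$ and for which the implication is derivable by (R1) then (R3)/(R6)). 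Thus replacing $\alpha \implc d \beta$ by $\{\kappa_i \implc d \beta : \kappa_i \text{ consistent}\}$ preserves the provability relation. The second step handles the right-hand side: given $\kappa \implc d \beta$ with $\kappa$ a consistent clause and $\beta \boolequ f(B)$ for a clause set $B$, we discard the inconsistent clauses of $B$ — each such clause is Boolean equivalent to $\false$ and adds nothing to the disjunction — to obtain a clause set $B'$ of consistent clauses with $f(B') \boolequ \beta$, and we replace $\kappa \implc d \beta$ by $\kappa \implc d f(B')$, which has the form (\ref{fml:basic-implication}) (with $n = |\kappa| \geq 1$ since a consistent clause may be empty — here one must be slightly careful: if $\kappa$ is empty it is Boolean equivalent to $\true$, and $\true \implc d f(B')$ can be re-expressed as $\phi \lor \lnot\phi \implc d f(B')$ for some variable $\phi$, and split by (R5) into two basic implications; similarly an empty clause in $B'$, standing for $\true$, makes the implication derivable by (R1)). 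Collecting, for each implication in $\mathcal T$ we obtain a finite set of basic implications, and ${\mathcal T}_b$ is the union of all these sets; since each original implication is interderivable (in \pLAE, over the empty theory together with the corresponding basic implications) with its replacement set, and conversely, we get ${\mathcal T}\proves\Phi \iff {\mathcal T}_b \proves \Phi$ for every $\Phi$.

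The one genuine subtlety, and the step I would expect to need the most care, is the bookkeeping around the degenerate cases: empty clauses (representing $\true$) and the empty clause set (representing $\false$), together with the side conditions $n\geq1$ and $k_i\geq1$ built into the definition (\ref{fml:basic-implication}) of a basic implication. One has to check that every implication really can be brought into the strict shape (\ref{fml:basic-implication}) — in particular that a $\true$ on the left can be rewritten as $\phi\lor\lnot\phi$ and split, that a $\false$ on the left makes the implication a \CPL-consequence hence provable by (R1), and that a $\false$ summand on the right can be deleted — and that in each such rewriting the interderivability is witnessed by the available rules (R1)–(R6) and not something stronger. The remaining content is the routine verification that (R5) composes several $\kappa_i \implc d \beta$ back into $\alpha \implc d \beta$ and that (R1)+(R6) let one substitute Boolean equivalents freely, which is exactly the remark already made in the excerpt just before the statement of the lemma.
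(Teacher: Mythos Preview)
Your proposal is correct and follows essentially the same route as the paper: put both sides into disjunctive normal form, split the left-hand side into its consistent conjuncts using (R1)+(R6) in one direction and (R5) in the other, and discard or absorb the degenerate cases. The paper is terser about the edge cases (it simply drops any implication with $\lnot\alpha$ or $\beta$ a tautology and then asserts ``w.l.o.g.'' that the remaining clauses are non-empty and consistent), whereas you spell out explicitly how to handle an empty clause on the left via $\phi\lor\lnot\phi$; your extra care is harmless but not strictly needed, and note that the trivial-right-side case requires (R3) after (R1), not (R1) alone.
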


\begin{proof}
Let us modify $\mathcal T$ as follows. Let $\alpha \implc{d} \beta$ be contained in $\mathcal T$. If $\lnot\alpha$ or $\beta$ is a tautology of \CPL, then $\alpha \implc{d} \beta$ is derivable in \LAE{} by (R1) and (R3). In this case, we drop it from $\mathcal T$. W.l.o.g., we can otherwise assume that the implication is of the form
\[ \bigvee_{1 \leq i \leq l'} \bigwedge_{1 \leq j \leq k'_i} \! \nu_{ij}
\;\;\implc{d}\;\;
\bigvee_{1 \leq i \leq l} \bigwedge_{1 \leq j \leq k_i} \mu_{ij}, \]
where $l' \geq 1$ and $l \geq 0$ and, for all $i$, $\{ \nu_{i1}, \ldots, \nu_{ik'_i} \}$, $\{ \mu_{i1}, \ldots, \nu_{ik_i} \}$ are non-empty consistent clauses. We drop in this case $\alpha \implc{d} \beta$ from $\mathcal T$ and add instead the basic implications
\begin{equation} \label{fml:basic-theory}
\bigwedge_{1 \leq j \leq k'_i} \!\!\! \nu_{ij} \;\implc{d}\; \bigvee_{1 \leq i \leq l} \bigwedge_{1 \leq j \leq k_i} \mu_{ij},
\end{equation}
where $i = 1, \ldots, l'$. By repeated application of (R5), we can derive $\alpha \implc{d} \beta$ from the implications (\ref{fml:basic-theory}). Conversely, by (R1) and (R6), we can derive the implication (\ref{fml:basic-theory}) for each $i$ from $\alpha \implc{d} \beta$.

Proceeding in the same way for each element of $\mathcal T$, we obtain a theory ${\mathcal T}_b$ that consists of basic implications only.
\end{proof}

In the sequel, we will identify proofs of \pLAE{} with certain weighted directed graphs labelled by Boolean formulas. We will adopt the common terminology to denote the relationship between nodes, like {\it child}, {\it father}, {\it sibling}, and {\it descendant}. Moreover, a {\it directed forest} is a directed graph such that each component is a directed tree; the latter's designated elements are called {\it roots} then. The nodes without child are called {\it terminal} nodes or {\it leaves}. A {\it subtree} as well as the subtree {\it rooted} at some node is understood in the obvious way. A {\it branch} is a maximal subtree with the property that each non-terminal node has exactly one child.

A {\it weighted} directed forest is a directed forest together with a map assigning to each edge a positive real number, called its {\it weight}. Finally, all graphs with which we deal will be labelled. However, to facilitate formulations, the mapping associating labels to nodes will be kept implicit; we will simply identify the nodes with their labels.

\begin{definition} \label{def:proof-forest}
A {\it proof forest} is a finite weighted directed forest each of whose nodes is either a clause or the symbol $\widerspruch$.

A node of a proof forest distinct from $\widerspruch$ is called {\it proper}. The {\it length} of a branch is defined to be $0$ if it contains $\widerspruch$ and else as the sum of the weights of its edges. By the length of a proof forest, we mean the maximum of the lengths of its branches.

Let $\mathcal T$ be a basic theory and let $\zeta \implc{r} \eta$ be an implication. Then a proof forest is called a {\it forest proof of $\zeta \implc{r} \eta$ from $\mathcal T$} if the following conditions hold:
\AxiomeAnfang
\Axiom{T1} There is a clause set $B_\zeta$ for $\zeta$ such that, for each clause $L$ in $B_\zeta$, there is a root that is a subset of $L$.

\Axiom{T2} There is a clause set $B_\eta$ for $\eta$ such that, for any terminal clause $L$, there is a clause in $B_\eta$ that is a subset of $L$.

\Axiom{T3} The length of the proof forest is at most $r$.

\Axiom{T4} Let $L$ be a non-terminal clause. Then the weights of all edges rooted at $L$ coincide; let $c$ be this value. One of the following possibilities applies:
\AxiomeAnfang
\Axiom{A} $c = 0$ and $\mathcal T$ contains a basic implication of the form (\ref{fml:basic-implication}), where $d = 0$, $\; \{ \lambda_1, \ldots, \lambda_n \} \subseteq L$, and for each $i = 1, \ldots, l$, a clause $L' \subseteq \{ \mu_{i1}, \ldots, \mu_{ik_i} \} \cup L$ is a child of $L$.

\Axiom{B} $c > 0$ and $\mathcal T$ contains a basic implication of the form (\ref{fml:basic-implication}), where $d = c$, $\; \{ \lambda_1, \ldots, \lambda_n \} \subseteq L$, and for each $i = 1, \ldots, l$, a clause $L' \subseteq \{ \mu_{i1}, \ldots, \mu_{ik_i} \}$ is a child of $L$.
\Axiom{C} $c = 0$, $L$ is inconsistent, and $\widerspruch$ is the only child of $L$.

\Axiom{D} $c = 0$, and for some variable $\phi$, called the {\it splitting} variable in the sequel, $L$ has exactly two children, one of which is a clause consisting of $\phi$ and a subset of $L$ and one of which is a clause consisting of $\lnot\phi$ and a subset of $L$.
\AxiomeEnde
\AxiomeEnde
\end{definition}

For illustration, let us include an example. Figure \ref{fig:Beispielbeweis} shows a forest proof of the implication
\[ \alpha \land \beta \;\implc{0.3}\; (\lnot\delta \land \epsilon) \lor (\delta \land \lnot\epsilon) \]
from the theory
\[ {\mathcal T} \;=\; \{ \alpha \implc{0} \lnot\beta \lor \gamma, \;\; \beta \land \gamma \implc{0.3} \delta \lor \epsilon, \;\; \delta \implc{0} \lnot\epsilon \}. \]
The block used for $\alpha \land \beta$ is $\{ \{ \alpha, \beta \} \}$; the block for $(\lnot\delta \land \epsilon) \lor (\delta \land \lnot\epsilon)$ is $\{ \{\lnot\delta, \epsilon \},$ $\{\delta, \lnot\epsilon\} \}$. Note that the length of the two branches not terminating with $\widerspruch$ is $0.3$; hence the length of the forest proof is $0.3$. We observe furthermore that all four cases of condition (T4) occur. For instance, at the root $\{ \alpha, \beta \}$, case (A) of (T4) applies; at its child $\{\beta, \gamma\}$, case (B) applies; at the root's child $\{ \beta, \lnot\beta \}$, case (C) applies; and at the node $\{ \delta \}$, case (D) applies.

\begin{figure}[ht!]
\begin{center}
\includegraphics[width=0.6\textwidth]{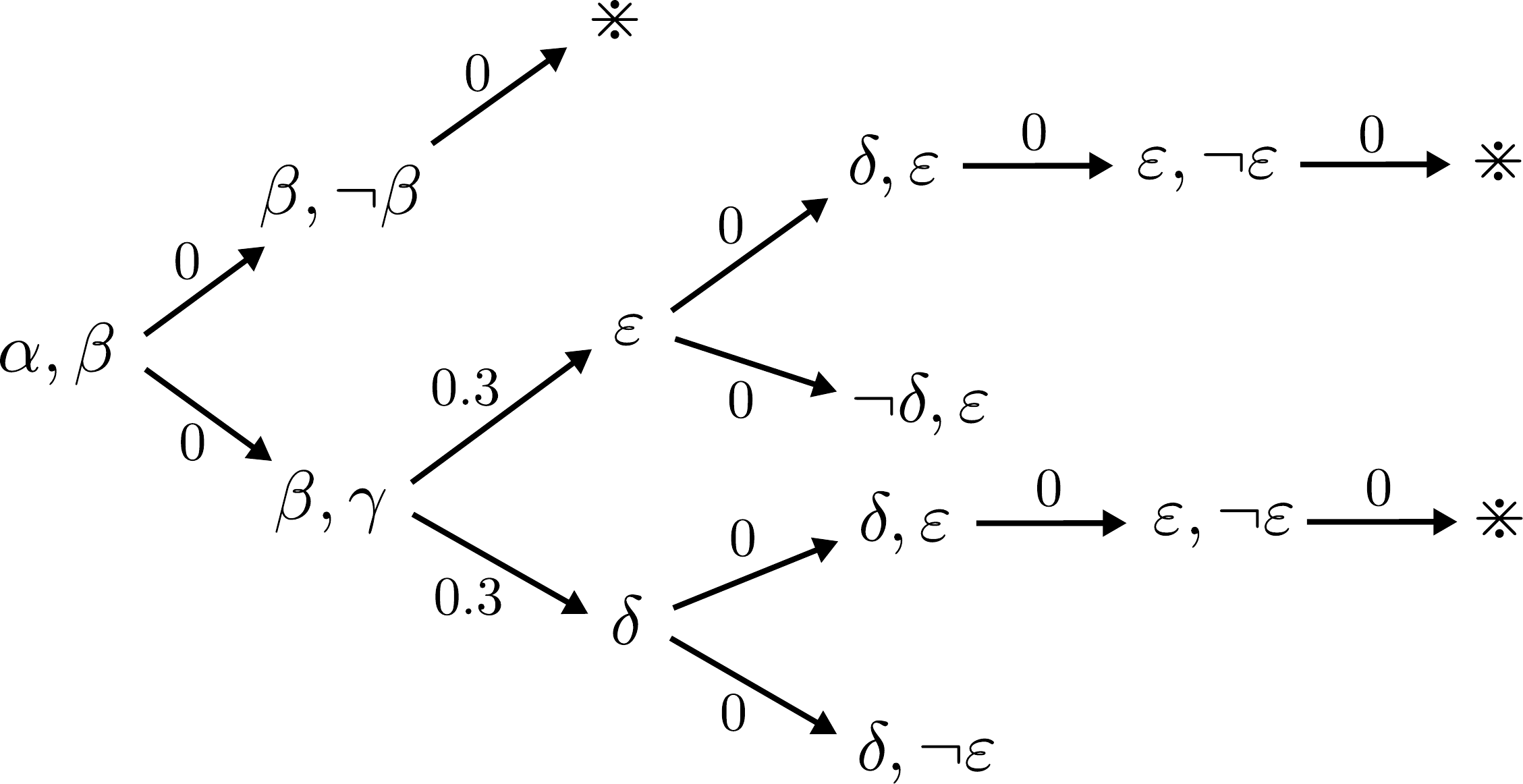}
\caption{An example of a forest proof. Set brackets are omitted.}
\label{fig:Beispielbeweis}
\end{center}
\end{figure}

The rest of this section is devoted to the aim of showing that provability in \pLAE{} is equivalent to the existence of proof forests.

We next show the first out of several technical lemmas, according to which a forest proof may be assumed to fulfil certain additional conditions.

Improper nodes are labelled by $\widerspruch$ and this symbol represents falsity. By the following lemma we can assume that the improper nodes are all terminal.

\begin{lemma} \label{lem:widerspruch-at-leaves-only}
Let there be a forest proof of $\zeta \implc{r} \eta$ from a basic theory $\mathcal T$. Then there is a forest proof of $\zeta \implc{r} \eta$ from $\mathcal T$ such that {\rm (i)} any node $\widerspruch$ is terminal and {\rm (ii)} a node $\widerspruch$ does not have siblings.
\end{lemma}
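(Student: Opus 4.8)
The plan is to transform an arbitrary forest proof into one of the required form by a local surgery at each improper node that is either non-terminal or has a sibling, arguing by a suitable induction that terminates. First I would observe that by case (T4), the symbol $\widerspruch$ can only be introduced as a child via case (C), where it is the unique child of an inconsistent clause $L$ and the connecting edge has weight $0$; it cannot be produced by cases (A), (B) or (D). Hence if a node $\widerspruch$ has a sibling, that sibling must have been forced by one of the cases (A), (B), (D) at the father — but those cases are incompatible with (C) producing $\widerspruch$ as a child. The only way $\widerspruch$ can have siblings is therefore under case (A): there $\mathcal T$ contains a basic implication with $\{\lambda_1,\dots,\lambda_n\}\subseteq L$ and each disjunct contributes a child $L'\subseteq\{\mu_{i1},\dots,\mu_{ik_i}\}\cup L$, and one such $L'$ happens to be inconsistent and we chose to put $\widerspruch$ below it rather than below that inconsistent clause. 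So the first reduction is: whenever $\widerspruch$ occurs with siblings, replace $\widerspruch$ by that inconsistent clause $L'$, and (since $L'$ is inconsistent) append a fresh $\widerspruch$ as the unique child of $L'$ via case (C), with edge weight $0$. This keeps all conditions (T1)--(T4) intact: the new edges have weight $0$ so lengths are unchanged, (T1) is untouched, and (T2) is satisfied at the new leaf $\widerspruch$ exactly as before.

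The second reduction deals with a node $\widerspruch$ that is non-terminal. Since $\widerspruch$ is not a clause, none of the cases (A)--(D) of (T4) can apply at $\widerspruch$ (they all speak of a non-terminal \emph{clause} $L$), so in fact a well-formed proof forest can never have a non-terminal $\widerspruch$ in the first place — but to be safe, if it did, one simply deletes the entire subtree rooted at $\widerspruch$ and leaves $\widerspruch$ as a leaf; deleting a subtree can only shorten branches, so (T3) is preserved, (T1) is preserved because roots are unaffected (a root labelled $\widerspruch$ would itself be a one-node tree, already terminal), and (T2) is preserved because the only new terminal node is $\widerspruch$ and, reading condition (T2), a clause in $B_\eta$ that is a subset of $\widerspruch$ is vacuously... here one must be slightly careful: (T2) asks for a clause in $B_\eta$ that is a subset of the terminal \emph{clause}, and $\widerspruch$ is not a clause, so the intended reading is that terminal occurrences of $\widerspruch$ impose no constraint via (T2); this is consistent with the example, where $\widerspruch$-leaves are allowed. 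I would spell this reading out explicitly.

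To assemble these into a proof, I would perform the first reduction repeatedly: each application strictly decreases the number of $\widerspruch$-nodes that have siblings (the replaced $\widerspruch$ is gone, the freshly added one is an only child), while not creating any non-terminal $\widerspruch$; so this process terminates, yielding a forest proof in which every $\widerspruch$ is an only child. Then a single pass of the second reduction (or the observation that it is vacuous for well-formed forests) ensures every $\widerspruch$ is terminal. The result is a forest proof of the same implication $\zeta\implc{r}\eta$ from the same theory $\mathcal T$ satisfying (i) and (ii).

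The main obstacle I anticipate is purely bookkeeping rather than conceptual: one must verify carefully that after the surgery in the first reduction the new node $L'$ still satisfies whichever case of (T4) is needed — here it is case (C), which requires $L'$ inconsistent and $\widerspruch$ its unique child, both of which hold by construction — and that the father of the old $\widerspruch$ still satisfies case (A) with $L'$ now playing the role of the child $L'\subseteq\{\mu_{i1},\dots,\mu_{ik_i}\}\cup L$ that it was always meant to be. One should also double-check that no edge weights change (all inserted edges have weight $0$) so that (T3) and the length of every branch are genuinely preserved, and that the termination measure — the number of siblinged $\widerspruch$-nodes — really does strictly decrease and cannot be increased as a side effect elsewhere in the forest.
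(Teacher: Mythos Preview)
Your treatment of (i) is essentially the paper's: prune the subtree below any non-terminal $\widerspruch$. Your aside that a well-formed forest ``can never have a non-terminal $\widerspruch$'' is mistaken---condition (T4) constrains only non-terminal \emph{clauses}, so a non-terminal $\widerspruch$ is unconstrained, not forbidden---but your fallback surgery is the right one.

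For (ii) there is a genuine gap. You assume that a $\widerspruch$ with siblings must have arisen under case (A) because one of the required disjunct children $L' \subseteq \{\mu_{i1},\ldots,\mu_{ik_i}\}\cup L$ was inconsistent and someone wrote $\widerspruch$ in its place. But the clauses of (T4)(A) and (B) only demand that, for each disjunct $i$, \emph{some} clause of the prescribed shape be a child of $L$; they do not say that the children of $L$ are exactly these clauses. A $\widerspruch$-child with siblings is therefore an \emph{extraneous} child, not a stand-in for a required one, and nothing forces any of the disjunct clauses to be inconsistent---indeed in case (B) each $L' \subseteq \{\mu_{i1},\ldots,\mu_{ik_i}\}$ is a subset of a consistent clause and hence itself consistent, so your exclusion of case (B) is unjustified. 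The inconsistent clause $L'$ on which your surgery depends may thus simply fail to exist. The paper's argument is both simpler and correct: since the $\widerspruch$-child is not needed for (T4) to hold at its father, one just deletes it (together with any subtree below); the required clause children remain, branch lengths can only decrease, and (T2) is unaffected.
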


\begin{proof}
Consider a non-terminal node $\widerspruch$. We remove all children of $\widerspruch$ together with the subtrees rooted at them. Then the length of the proof forest is smaller than before. Moreover, each terminal node of the modified proof is a terminal node of the original proof; hence condition (T2) is still fulfilled. We conclude that the result is a forest proof of $\zeta \implc{r} \eta$ from $\mathcal T$. Applying the same procedure in all applicable cases, the first part follows.

Next, consider a clause $L$ that has a child $\widerspruch$ as well as further children. Then case (A) or (B) applies. Obviously, we can remove the subtree rooted at the child $\widerspruch$. Arguing as before, the second part follows as well.
\end{proof}

\begin{theorem} \label{thm:tob-proof-1}
Let $\mathcal T$ be a basic theory, and let $\zeta \implc{r} \eta$ be an implication. If there is a forest proof of $\zeta \implc{r} \eta$ from $\mathcal T$, then $\mathcal T \proves \zeta \implc{r} \eta$.
\end{theorem}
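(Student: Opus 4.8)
The plan is to show that a forest proof of $\zeta \implc{r} \eta$ from $\mathcal T$ can be converted, step by step, into a proof in \pLAE{}. The natural strategy is an induction, working from the leaves of the forest back to the roots, assigning to each node $L$ an implication that is provable from $\mathcal T$ and whose premise is the conjunction $\bigwedge L$ of the literals in $L$ and whose conclusion and degree are dictated by the subtree rooted at $L$. By Lemma \ref{lem:widerspruch-at-leaves-only} we may first assume that every $\widerspruch$-node is terminal and has no siblings, which simplifies the case analysis for the non-terminal nodes.

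Concretely, I would prove by induction on the height of the subtree rooted at a node $L$ the following claim: if the subtree rooted at $L$ has length $\leq s$ and the terminal clauses $L'$ occurring in it all have a subset lying in some fixed clause set $B_\eta$ for $\eta$, then $\mathcal T \proves \bigwedge L \implc{s} \eta$. For the base case, a terminal proper node $L$ contains, by (T2), a clause $L_0 \in B_\eta$ with $L_0 \subseteq L$; then $\bigwedge L \impl \bigwedge L_0 \impl f(B_\eta) \boolequ \eta$ is a \CPL{} tautology, so (R1) gives $\bigwedge L \implc{0} \eta$, and (R3) weakens the degree to $s$. A terminal node $\widerspruch$ arises only as the unique child in case (C), handled with its father. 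For the inductive step, I would go through the four cases of (T4): in case (C), $L$ is inconsistent, so $\bigwedge L \implc{0} \false$ is a \CPL{} tautology, hence $\bigwedge L \implc{0} \eta$ follows by (R1) (since $\false \impl \eta$), and (R3) adjusts the degree. In case (D) with splitting variable $\phi$, the two children are (up to Boolean equivalence) $\bigwedge L \land \phi$ and $\bigwedge L \land \lnot\phi$; the induction hypothesis gives $\bigwedge L \land \phi \implc{s} \eta$ and $\bigwedge L \land \lnot\phi \implc{s} \eta$, and (R5) together with the tautology $\bigwedge L \boolequ (\bigwedge L \land \phi) \lor (\bigwedge L \land \lnot\phi)$ yields $\bigwedge L \implc{s} \eta$. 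In cases (A) and (B), the theory contains a basic implication $\lambda_1 \land \cdots \land \lambda_n \implc{c} \bigvee_i \bigwedge_j \mu_{ij}$ with $\{\lambda_1,\dots,\lambda_n\} \subseteq L$, and each child $L'_i$ satisfies $L'_i \subseteq \{\mu_{i1},\dots,\mu_{ik_i}\} \cup L$ (case A) or $L'_i \subseteq \{\mu_{i1},\dots,\mu_{ik_i}\}$ (case B). By the induction hypothesis, $\bigwedge L'_i \implc{s'} \eta$ is provable, where $s'$ is the length of the subtree at $L'_i$, so $s' + c \leq s$ by the weight bookkeeping. Using (R1)/(R2) to pass from the basic implication to $\bigwedge L \implc{c} \bigvee_i \bigwedge_j (\mu_{ij} \land \text{(the literals of $L$, in case A)})$, then (R5) to collect the disjuncts and (R6) to compose with the $\bigwedge L'_i \implc{s'} \eta$, one obtains $\bigwedge L \implc{s} \eta$.

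Once the claim is established, applying it at each root and using (T1)—each clause $L$ of the clause set $B_\zeta$ for $\zeta$ contains a root $L_{\mathrm{rt}} \subseteq L$, so $\bigwedge L \impl \bigwedge L_{\mathrm{rt}}$ and hence $\bigwedge L \implc{r} \eta$ follows via (R1) and (R6)—and finally (R5) to join all clauses of $B_\zeta$, we get $f(B_\zeta) \implc{r} \eta$, i.e. $\zeta \implc{r} \eta$, as desired.

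The main obstacle I anticipate is the bookkeeping in cases (A) and (B): one must carefully track how the degree accumulates along branches (the weights of all edges out of $L$ are equal to the single value $c$, but different children head subtrees of possibly different lengths, all bounded by $r$ minus the weight already spent above), and in case (A) one must correctly carry along the extra literals of $L$ that are adjoined to the child clauses, which forces a bit of propositional reshuffling with (R2) before (R6) can be applied. Getting the statement of the inductive claim right — in particular quantifying over the length bound $s$ rather than fixing it — is what makes the composition via (R6) go through cleanly.
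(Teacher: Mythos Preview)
Your overall architecture---work from the leaves to the roots, prove an implication $\bigwedge L \implc{s} \eta$ at each proper node, then combine at the roots via (R1), (R5), (R6)---matches the paper's. But there is a real gap in the inductive claim, and it is precisely in the ``weight bookkeeping'' you flag at the end.

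The problem is the convention that a branch containing $\widerspruch$ has length $0$, \emph{regardless of the edge weights along it}. Consequently, for a non-terminal clause $L$ to which case~(B) applies with $c>0$, if every branch of the subtree rooted at $L$ ends in $\widerspruch$, then the length of that subtree is $0$; the same is true for each child $L'_i$. Your asserted inequality $s'+c\le s$ then reads $0+c\le 0$, which is false. More generally, along a path from a root down to a $\widerspruch$-leaf the accumulated edge weights are entirely unconstrained by (T3), so the quantity ``$r$ minus the weight already spent above'' can become negative and is not available as a legitimate degree. Since you never invoke (R4), you have no way to recover: from the basic implication of degree $c>0$ and $\bigwedge L'_i\implc{0}\eta$ you only get $\bigwedge L\implc{c}\eta$, not $\bigwedge L\implc{0}\eta$. (Incidentally, $\widerspruch$ is not only the child in case~(C); cases~(A) and~(B) with $l=0$, i.e.\ a basic implication whose right-hand side is $\false$, also produce a $\widerspruch$-child.)

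The fix the paper uses is to strengthen the inductive hypothesis to a two-part statement: for a proper node $L$ with subtree $S$ of length $e$, one proves $\bigwedge L\implc{e}\eta$, \emph{and additionally}, if every leaf of $S$ is $\widerspruch$, one proves $\bigwedge L\implc{0}\false$. In the all-$\widerspruch$ situation one then argues via the second clause: each child gives $\bigwedge L'_i\implc{0}\false$, hence $\bigvee_i\bigwedge L'_i\implc{0}\false$ by (R5), then $\bigwedge L\implc{c}\false$ by (R6), and finally (R4) collapses this to $\bigwedge L\implc{0}\false$, from which $\bigwedge L\implc{0}\eta$ follows. Your proposal needs exactly this extra clause and the use of (R4); once added, the rest of your argument goes through as written.
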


\begin{proof}
By Lemma \ref{lem:widerspruch-at-leaves-only}, we can assume that every node $\widerspruch$ is terminal and does not possess siblings.

Let $B_\zeta$ be a clause set for $\zeta$ according to (T1). If $B_\zeta$ is empty, we have $\zeta \boolequ \false$ and the assertion is clear from (R1) and (R3). Note that in this case the proof forest is arbitrary. For the rest of the proof, we assume that $B_\zeta$ is non-empty.

Let $\bar L$ be any clause contained in $B_\zeta$ and let us consider the tree whose root is a subset of $\bar L$. Our aim is to show $\mathcal T \proves \bigwedge \bar L \implc{r} \eta$. As $\zeta$ is Boolean equivalent to the disjunction of all conjunctions of a clause in $B_\zeta$, the assertion will then follow by (R1) and (R5). We claim:

($\star$) Let $L$ be a proper node. Let $S$ be the subtree rooted at $L$ and let $e$ be the length of $S$. Then $\bigwedge L \implc{e} \eta$ is provable in \pLAE{} from $\mathcal T$. Moreover, if all leaves of $S$ are $\widerspruch$, then $\bigwedge L \implc{0} \false$ is provable in \pLAE{} from $\mathcal T$.

Our assertion follows from ($\star$) applied to the root. Based on the following facts (a)--(c), ($\star$) follows by induction over the subtrees.

(a) Let the clause $L$ be a leaf. Then, by (T2), $\bigwedge L \implc{0} \eta$ is a tautology of \CPL{} and hence derivable by (R1).

(b) Let $L$ be a non-terminal clause and let $\widerspruch$ be its only child. We claim that then ${\mathcal T} \proves \bigwedge L \implc{0} \false$. Indeed, in case that (A) or (B) applies to $L$, $\mathcal T$ contains $\bigwedge L' \implc{c} \false$ for some $L' \subseteq L$ and $c \in \Reals^+$; by (R1) and (R4), the implication in question is derivable. In case (C), $\bigwedge L \boolequ \false$, and we draw the same conclusion. Case (D) cannot apply.

(c) Let $L$ be a non-terminal clause with the proper children $L_1, \ldots, L_n$. Let $c$ be the weight of the edges rooted at $L$. Assume that, for each $i = 1, \ldots, n$, ($\star$) holds for $L_i$. We claim that then ${\mathcal T} \proves \bigwedge L \implc{c} \bigvee_i \bigwedge L_i$. Indeed, in case that (A) applies to $L$, let $\alpha \implc{0} \beta$ be the used basic implication from $\mathcal T$. Then there is a conjunction $\gamma$ of literals such that $\alpha \land \gamma$ is Boolean equivalent with $\bigwedge L$ and $\beta \land \alpha \land \gamma \impl \bigvee_i \bigwedge L_i$ is a \CPL{} tautology. We apply (R2) to prove $\alpha \land \gamma \implc{0} \beta \land \alpha \land \gamma$ and by (R1), (R6) we derive $\bigwedge L \implc{0} \bigvee_i \bigwedge L_i$. In case (B), we proceed similarly; the application of (R2) being omitted. Case (C) cannot occur. In case (D), we use (R1).

Assume next that not all leaves of the subtree rooted at $L$ are $\widerspruch$. By assumption, we have, for all $i$, $\bigwedge L_i \implc{e_i} \eta$, where $e_i$ is the length of the subtree rooted at $L_i$. By (R5), we can derive $\bigvee_i \bigwedge L_i \implc{e} \eta$ as well, where $e$ is the largest value among the $e_i$, and by (R6) furthermore $\bigwedge L \implc{c+e} \eta$, and $c + e$ is the length of the subtree rooted at $L$.

Assume now that all leaves of the subtree rooted at $L$ are $\widerspruch$. By assumption, we then have, for all $i$, $\bigwedge L_i \implc{0} \false$. By (R5), (R6), and (R4), we derive $\bigwedge L \implc{0} \false$. In particular, we derive $\bigwedge L \implc{0} \eta$, and $0$ is the length of the subtree rooted at $L$. The proof of ($\star$) is complete.
\end{proof}

\begin{lemma} \label{lem:contradictory-clauses-end}
Let there be a forest proof of $\zeta \implc{r} \eta$ from a basic theory $\mathcal T$. Then there is a forest proof of $\zeta \implc{r} \eta$ from $\mathcal T$ such that each inconsistent clause is non-terminal and has the only child $\widerspruch$.
\end{lemma}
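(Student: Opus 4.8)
The plan is to repair the given forest proof by a local surgery, replacing one at a time every inconsistent clause that is not yet of the desired shape by a node of type (C) in condition (T4). Call an inconsistent clause $L$ occurring in the forest proof \emph{bad} if it is either a leaf or a non-terminal node whose unique child is not $\widerspruch$; equivalently, $L$ is inconsistent and case (C) does not already apply at $L$. Since the forest is finite there are only finitely many bad clauses, and I would argue by induction on their number. If there are none, we are done; otherwise I pick a bad clause $L$ and describe a modification that strictly decreases the count while preserving (T1)--(T4).

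The modification is: delete the subtree rooted at $L$ except for $L$ itself (nothing is deleted if $L$ is a leaf), and attach to $L$ a single new child $\widerspruch$ by an edge of weight $0$. It remains to check that the result is again a forest proof of $\zeta \implc{r} \eta$ from $\mathcal T$. Condition (T1) is untouched, since the roots and the clause set $B_\zeta$ are unchanged, and $L$ --- should it be a root --- stays a root with the same label. For (T2), the clauses occurring as leaves of the new forest form a subset of those of the old one: any newly introduced leaf is $\widerspruch$, which is not a clause and thus imposes no obligation, and $L$ itself is no longer a leaf; hence the old witnesses from $B_\eta$ still serve. For (T3), each branch of the new forest either survives verbatim from the old one or now runs through $L$ to $\widerspruch$ and so has length $0$; in both cases its length is at most $r$, so the length of the forest does not increase. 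For (T4), case (C) now applies at $L$ ($c = 0$, $L$ inconsistent, $\widerspruch$ the only child); the father of $L$, if present, still has $L$ among its children with the same edge weight, so whichever of (A)--(D) held there still holds; and every remaining node keeps exactly the subtree it had before --- a subtree is either kept whole or removed whole --- so its condition is unaffected.

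Finally, the count of bad inconsistent clauses strictly decreases: $L$ is now of type (C), no new clause is introduced (only a $\widerspruch$-node), and any bad clause inside the removed subtree has simply vanished. As this count is a non-negative integer, finitely many iterations yield a forest proof in which every inconsistent clause is non-terminal and has $\widerspruch$ as its only child. I expect no genuine obstacle here; the one point demanding care is the bookkeeping for (T2) and (T4) --- in particular, observing that deleting a subtree cannot damage the local condition at any surviving node, and that placing $\widerspruch$ below a former leaf creates no new (T2) requirement because $\widerspruch$ is not a clause.
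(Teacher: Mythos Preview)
Your proposal is correct and follows essentially the same approach as the paper: locate an inconsistent clause, prune any subtree below it, and attach a single $\widerspruch$-child with weight $0$, then iterate. Your write-up is in fact more careful than the paper's, making the induction explicit and verifying (T1)--(T4) one by one where the paper simply asserts that ``we readily check'' the result is still a forest proof.
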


\begin{proof}
Let $\phi$ be a variable and let $L$ be a clause containing $\phi$ and $\lnot\phi$. If $L$ is non-terminal, we remove each child of $L$ together with the subtree rooted at it. Afterwards, we add $\widerspruch$ as the only child of $L$ and we endow the edge from $L$ to $\widerspruch$ with the weight $0$.

Then case (C) of condition (T4) applies at $L$. Furthermore, we readily check that the modified forest proof is still a forest proof of $\zeta \implc{r} \eta$ from $\mathcal T$.
\end{proof}

\begin{theorem} \label{thm:tob-proof-2}
Let $\mathcal T$ be a basic theory and let $\zeta \implc{r} \eta$ be an implication. If $\mathcal T \proves \zeta \implc{r} \eta$, then there is a forest proof of $\zeta \implc{r} \eta$ from $\mathcal T$.
\end{theorem}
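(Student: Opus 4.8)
The plan is to proceed by induction on the structure of a \pLAE{}-proof of $\zeta \implc{r} \eta$ from $\mathcal T$. For the base case, $\zeta \implc{r} \eta$ is either an instance of (R1) or an element of $\mathcal T$. If it is an (R1) instance, then $\bigwedge L \impl \eta$ is a \CPL{} tautology for every consistent clause $L$ in any clause set for $\zeta$; I would build a one-level (or zero-level) forest whose roots are exactly the clauses of a clause set for $\zeta$ and whose leaves coincide with those roots, checking (T2) directly. (Inconsistent clauses in the clause set can be handled using Lemma \ref{lem:contradictory-clauses-end}, giving each such root the single child $\widerspruch$.) If $\zeta \implc{r} \eta$ is a basic implication in $\mathcal T$ of the form (\ref{fml:basic-implication}), the canonical one-step forest is a single root $\{\lambda_1,\ldots,\lambda_n\}$ with children the clauses $\{\mu_{i1},\ldots,\mu_{ik_i}\}$, edge weights all equal to $r$; this satisfies (T4)(A) or (T4)(B) depending on whether $r=0$.

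For the inductive step I would treat each rule (R1)--(R6) in turn, assuming forest proofs exist for the premisses and constructing one for the conclusion. Rules (R1) and (R2) only change the Boolean formulas in a way absorbed by the freedom in choosing clause sets in (T1)/(T2): for (R2), replacing $\alpha \implc{0}\beta$ by $\alpha\land\gamma \implc{0}\beta\land\gamma$, I would take the given forest and intersect/augment the labels along each tree so the literals of $\gamma$ are carried through (using that (T4)(A) permits adding the residual literals of $L$ into the children). Rule (R3), weakening $c$ to $d\geq c$, is immediate since (T3) only bounds the length from above. Rule (R4), passing from $\alpha \implc{c}\false$ to $\alpha \implc{0}\false$: here a forest proof of $\alpha \implc{c}\false$ must have all leaves labelled by a clause in a clause set for $\false$, i.e. no leaves at all, so every branch terminates in $\widerspruch$ and hence has length $0$; thus the same forest already witnesses $\alpha \implc{0}\false$. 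Rule (R5), from $\alpha \implc{c}\gamma$ and $\beta\implc{c}\gamma$ to $\alpha\lor\beta \implc{c}\gamma$: I would simply take the disjoint union of the two forests, noting that a clause set for $\alpha\lor\beta$ is obtained as the union of clause sets for $\alpha$ and $\beta$, so (T1) and (T2) are preserved, and the length is the max of the two, still $\leq c$. Rule (R6), from $\alpha \implc{c}\beta$ and $\beta \implc{d}\gamma$ to $\alpha \implc{c+d}\gamma$: this is the substantive case — I would graft, at each leaf $L$ of the first forest (which by (T2) contains some clause $L'\subseteq L$ from a clause set $B_\beta$ for $\beta$), a copy of a tree from the second forest rooted at a subset of that same clause $L'$, after using (T1) for the second forest to match up roots with clauses of $B_\beta$. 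One must check that the conditions (T4) are local and survive grafting (the pasted-in subtrees keep their own structure, and the leaf $L$ becomes an internal node governed by whatever governs the grafted subtree's root), and that lengths add correctly so the total is $\leq c+d$, giving (T3). A preliminary normalisation via Lemma \ref{lem:widerspruch-at-leaves-only} and Lemma \ref{lem:contradictory-clauses-end} keeps the $\widerspruch$-nodes and inconsistent clauses under control during these manipulations.

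I expect the main obstacle to be the (R6) grafting step, specifically reconciling the two ``interface'' clause sets: the leaves of the first forest are bounded \emph{below} by clauses of some clause set $B_\beta$ for $\beta$ (via (T2)), while the roots of the second forest are bounded \emph{below} by clauses of \emph{some possibly different} clause set $B'_\beta$ for $\beta$ (via (T1)), and one must arrange a common clause set so that every first-forest leaf can be legitimately extended by a second-forest subtree whose root is a subset of it. Resolving this likely requires first replacing the second forest's (T1)-clause set by a chosen canonical disjunctive normal form of $\beta$ — and showing a forest proof can always be rearranged so its roots realise a prescribed clause set for the antecedent, perhaps by an auxiliary lemma about ``refining'' the roots (adding splitting nodes of type (T4)(D) on top, or shrinking roots to subsets). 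A secondary subtlety is the bookkeeping in (R2): one must verify that carrying the extra conjunct $\gamma$ through an arbitrary forest never violates consistency requirements on the clauses appearing as nodes, which may force discarding branches that become inconsistent — handled again by Lemma \ref{lem:contradictory-clauses-end} and the observation that an inconsistent node can be capped by $\widerspruch$ without harming (T2).
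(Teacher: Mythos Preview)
Your proposal is correct and follows essentially the same approach as the paper: induction on the structure of the \pLAE{}-derivation, with each rule handled as you describe. In particular, your treatments of (R1), (R3), (R4), (R5) match the paper almost verbatim, and your identification of (R6) as the substantive case, with (T4)(D) splitting as the bridging device, is exactly right.

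The one tactical difference worth noting concerns (R6). You propose to reconcile the interface by reworking the \emph{second} forest so that its (T1)-clause set matches the first forest's (T2)-clause set, which would require the auxiliary lemma you sketch. The paper instead leaves the second forest untouched and extends the \emph{first} forest's proper leaves: at each such leaf $L$ it applies (T4)(D) repeatedly until every resulting leaf $L'$ contains some root $\bar L$ of the second forest as a subset, and then attaches the children of $\bar L$ directly to $L'$. This sidesteps your auxiliary lemma entirely, since the relation $\bar L \subseteq L'$ is all that is needed for the (T4)-condition at $\bar L$ to transfer to $L'$. Your route would work too, but the paper's is shorter. For (R2), the paper makes explicit the observation you gesture at: in a length-$0$ forest proof, every branch through a (T4)(B) node must terminate in $\widerspruch$, so the added literal only needs to be propagated through (A)- and (D)-nodes.
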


\begin{proof}
By (T4), cases (A) or (B), each implication contained in $\mathcal T$ possesses a forest proof.

We will now check for each of the rules (R1)--(R6) that whenever each assumption possesses a forest proof, so does the conclusion. The assertion will then follow by induction over the length of a proof in \pLAE.

Ad (R1). Let $\alpha \impl \beta$ be a tautology of \CPL. Then there is a clause set $B_\alpha = \{ L_1, \ldots, L_n \}$ for $\alpha$ and a clause set $B_\beta$ for $\beta$ such that, for each $i$, $L_i$ is a subset of a clause in $B_\beta$. Then the forest that consists of the single node $\widerspruch$ if $B_\alpha$ is empty, and of the isolated nodes $L_i$ otherwise is a forest proof of $\alpha \implc{0} \beta$.

Ad (R2). Assume that there is a forest proof of $\alpha \implc{0} \beta$. Let $\gamma$ be any Boolean formula; then there is also a forest proof of $\alpha \land \gamma \implc{0} \beta \land \gamma$, as follows from the following two observations.

First, let $\lambda$ be a literal; we construct a forest proof of $\alpha \land \lambda \implc{0} \beta \land \lambda$ as follows. If $\alpha \boolequ \false$, the forest proof consisting of the node $\widerspruch$ alone fulfils the requirement. Otherwise, we add $\lambda$ to the (proper) roots; and for all proper nodes $L$ based on case (A) or (D), we recursively add $\lambda$ to the proper children of $L$ if $\lambda$ was added to $L$. The result fulfils the requirements; for, if a node $L$ is based on case (B) or (C), all branches through $L$ end up with $\widerspruch$.

Second, assume that, for Boolean formulas $\gamma_1$ and $\gamma_2$, there are forest proofs of $\alpha \land \gamma_1 \implc{0} \beta \land \gamma_1$ as well as $\alpha \land \gamma_2 \implc{0} \beta \land \gamma_2$. Joining both these proofs, we get a forest proof of $\alpha \land (\gamma_1 \lor \gamma_2) \implc{0} \beta \land (\gamma_1 \lor \gamma_2)$.

Ad (R3). In view of (T3), a forest proof of $\alpha \implc{c} \beta$ is also a forest proof of $\alpha \implc{d} \beta$, where $d \geq c$.

Ad (R4). Let a forest proof of $\alpha \implc{c} \false$ be given. By Lemma \ref{lem:contradictory-clauses-end} we can assume that no leaf is inconsistent. We conclude that all leaves are $\widerspruch$. Hence the proof forest is actually a forest proof of $\alpha \implc{0} \false$.

Ad (R5). Assume there are forest proofs of $\alpha \implc{c} \gamma$ and $\beta \implc{c} \gamma$. Then their union is a forest proof of $\alpha \lor \beta \implc{c} \gamma$.

Ad (R6). Assume there are forest proofs of $\alpha \implc{c} \beta$ and $\beta \implc{d} \gamma$. By Lemma \ref{lem:contradictory-clauses-end}, we can assume that no terminal clause is inconsistent.

Assume first that $\beta \boolequ \false$. Then, as we have argued in case of the rule (R4), all leaves of the first proof forest are $\widerspruch$. Hence the first proof forest is a also forest proof of $\alpha \implc{c} \gamma$ and hence of $\alpha \implc{c+d} \gamma$.

Assume second that $\beta$ is not Boolean equivalent to $\false$. Let $L$ be a proper leaf of the first proof. Then we can enlarge the proof forest by applications of (T4)(D) to $L$ such that each new leaf $L'$ includes a root $\bar L$ of the second proof. We then connect all children of $\bar L$, together with the subtrees rooted at them, to $L'$. Proceeding in the same way for all proper leafs of the first proof, we obtain a forest proof of $\alpha \implc{c + d} \gamma$.
\end{proof}

We summarise that \pLAE{} proves an implication $\zeta \implc{r} \eta$ from a theory $\mathcal T$ exactly if there exists a forest proof of $\zeta \implc{r} \eta$ from $\mathcal T$.

\section{Quasimetric Boolean algebras}
\label{sec:quasimetric-BA}

The propositions of our logic are interpreted by subsets of a quasimetric space. Moreover, the degree to which an implication is defined to hold is determined by the Hausdorff quasidistance, which assigns to any two elements of this algebra a positive real.

In this context we are led to the following notion. Recall that a Boolean algebra $\mathcal B$ is called {\it separable} if $\mathcal B$ is generated by a countable subset.

\begin{definition}
A {\it quasimetric Boolean algebra} is a pair $({\mathcal B}, p)$ consisting of a separable Boolean algebra $({\mathcal B}; \wedge, \vee, \compl, 0, 1)$ and a mapping $p \colon {\mathcal B} \times {\mathcal B} \to \ExtReals$ such that, for all $A, B, C \in {\mathcal B}$,
\AxiomeAnfang
\Axiom{QB1} $p(A,B) = 0$ if and only if $A \leq B$.

\Axiom{QB2} $p(A \vee B, C) = p(A, C) \vee p(B, C)$.

\Axiom{QB3} $p(A, B) \leq p(A, C)$ if $C \leq B$.

\Axiom{QB4} $p(A, C) \leq p(A, B) + p(B, C)$.
\AxiomeEnde
\end{definition}

We note that the present notion should not be confused with the notion of a metric Boolean algebra, which is usually meant to be a Boolean algebra together with a measure on it.

The aim of this section is to establish a representation theorem for quasimetric Boolean algebras. The Boolean algebra $\mathcal B$ will be represented by a set of subsets of a set $W$ and the quasimetric $p$ on $\mathcal B$ by the Hausdorff quasidistance induced by a quasimetric on $W$.

We will need in the sequel Stone's representation theorem of Boolean algebras \cite{Sik}. Recall that a subset $F$ of a Boolean algebra $\mathcal B$ is called a prime filter if (i) $a \in F$ and $a \leq b$ imply $b \in F$, (ii) $a \wedge b \in F$ for any $a, b \in F$, and (iii) for each $a \in {\mathcal B}$, exactly one of $a$ and $\compl a$ is in $F$.

\begin{theorem}
Let $({\mathcal B}; \wedge, \vee, \compl, 0, 1)$ be a Boolean algebra. Then there is a compact topological space $W$ such that $\mathcal B$ is isomorphic with $({\mathcal C}; \cap, \cup, \complement, \emptyset, W)$, where $\mathcal C$ is the set of clopen subsets of $W$.
\end{theorem}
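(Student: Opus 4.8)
This is the classical representation theorem of M.~H.~Stone, which we use as a cited result; for completeness I indicate how a proof runs. The plan is to take for $W$ the set of all prime filters of $\mathcal B$, equipped with the topology whose basic open sets are
\[ \hat a \;=\; \{ F \in W \colon a \in F \} \qquad (a \in {\mathcal B}). \]
These sets form a basis of a topology because $\hat a \cap \hat b = \widehat{a \wedge b}$, which is immediate from closure of a filter under $\wedge$ together with upward closure, and because $\hat 1 = W$.

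The first step is to check that $a \mapsto \hat a$ is a homomorphism of Boolean algebras from $\mathcal B$ into the algebra $\mathcal C$ of clopen subsets of $W$. Indeed, $\widehat{a \wedge b} = \hat a \cap \hat b$ as just noted; property (iii) in the definition of a prime filter gives $\widehat{\compl a} = W \setminus \hat a$; the De Morgan laws then yield $\widehat{a \vee b} = \hat a \cup \hat b$; and $\hat 0 = \emptyset$, $\hat 1 = W$. In particular each $\hat a$ is clopen, being open by construction and closed because its complement $\widehat{\compl a}$ is open; so the map indeed takes values in $\mathcal C$.

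It remains to see that this homomorphism is injective and maps onto $\mathcal C$; along the way one also obtains compactness of $W$. Both injectivity and compactness rest on the existence of enough prime filters. For injectivity, suppose $a \not\leq b$; then $a \wedge \compl b \neq 0$, so the principal filter $\{ x \in {\mathcal B} \colon x \geq a \wedge \compl b \}$ is proper and, by Zorn's lemma (this is the Boolean prime ideal theorem), extends to a prime filter $F$; since $a \in F$ while $b \notin F$, we get $\hat a \neq \hat b$, and hence the map is injective. For compactness, suppose a family $\{ \hat{a_i} \colon i \in I \}$ covers $W$ but admits no finite subcover; then $\bigvee_{i \in J} a_i \neq 1$, hence $\bigwedge_{i \in J} \compl{a_i} \neq 0$, for every finite $J \subseteq I$, so $\{ \compl{a_i} \colon i \in I \}$ generates a proper filter, which extends to a prime filter lying outside every $\hat{a_i}$ --- contradicting that the $\hat{a_i}$ cover $W$. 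Finally, any clopen $C \subseteq W$ is open, hence the union of the basic sets it contains, and is also compact, being closed in the compact space $W$; thus finitely many such basic sets already have union $C$, so $C = \hat{a_1} \cup \dots \cup \hat{a_n} = \widehat{a_1 \vee \dots \vee a_n}$ lies in the image. Therefore $a \mapsto \hat a$ is an isomorphism of $\mathcal B$ onto $\mathcal C$.

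The one genuinely non-elementary ingredient, and the step I expect to be the crux, is the Boolean prime ideal theorem --- that every proper filter extends to a prime filter; it is used both for injectivity and for compactness, and everything else is routine verification. I remark that the separability of $\mathcal B$, which is part of the definition of a quasimetric Boolean algebra, plays no role here; it will be needed only for the refined, quasimetric version of the representation established in what follows.
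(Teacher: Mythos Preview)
Your proof is correct and follows essentially the same approach as the paper: both take $W$ to be the space of prime filters with the topology generated by the sets $\hat a$, and establish that $a \mapsto \hat a$ is a Boolean isomorphism onto the clopen subsets. The paper only sketches this (it is cited as Stone's theorem), while you supply the details of compactness and surjectivity; your closing remark about separability being irrelevant here is also apt.
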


\begin{proof}[Proof (sketched)]
Let $W$ be the set of prime filters of $\mathcal B$. Define $\iota \colon {\mathcal B} \to {\mathcal P}(W) \komma a \mapsto \{ F \in W \colon a \in F \}$. Then $\iota$ is an injective homomorphism from $\mathcal B$ to ${\mathcal P}(W)$, endowed with the set-theoretical operations.

Let furthermore $W$ be endowed with the coarsest topology whose open sets include $\iota(a)$ for all $a \in {\mathcal B}$. Then the image of $\mathcal B$ is the set of clopen subsets of $W$. Hence $\iota$ induces an isomorphism as claimed.
\end{proof}

Our representation theorem will not apply for quasimetric Boolean algebras in general; we will assume two further properties.

\begin{theorem} \label{thm:metricBA-representation}
Let $({\mathcal B}, p)$ be a quasimetric Boolean algebra such that the following additional conditions are fulfilled.
\AxiomeAnfang
\Axiom{QB5} For all $A, B, C \in {\mathcal B}$, there are $A_B, A_C \in {\mathcal B}$ such that $A = A_B \vee A_C$ and $p(A, B \cup C) = p(A_B, B) \vee p(A_C, C)$.

\Axiom{QB6} For any sequence $(A_i)_i$, from $\inf_i p(A_i, B) = 0$ it follows $A_j \leq B$ for some $j$.
\AxiomeEnde
Then there exists a quasimetric space $(W, q)$ and an injective Boolean homomorphism $\iota$ from $\mathcal B$ to the algebra of subsets of $W$ such that
\[ p(A,B) \;=\; q(\iota(A), \iota(B)) \]
for all $A, B \in {\mathcal B}$.
\end{theorem}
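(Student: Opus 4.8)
The plan is to build the quasimetric space $(W,q)$ on top of the Stone space of $\mathcal B$. First I would take $W$ to be the set of prime filters of $\mathcal B$ and $\iota\colon a \mapsto \{F \in W : a \in F\}$ the Stone embedding, so that $\iota$ is an injective Boolean homomorphism onto the clopen sets. The key issue is to define a quasimetric $q$ on $W$ whose induced Hausdorff quasidistance, restricted to the image $\iota(\mathcal B)$, recovers $p$. The natural candidate is to set, for prime filters $F,G$,
\[
q(F,G) \;=\; \inf\,\{\, p(\iota^{-1}(U), \iota^{-1}(V)) : F \in U,\ G \in V,\ U,V \in \iota(\mathcal B)\,\},
\]
or, more directly, $q(F,G) = \inf\{ p(A,B) : A \in F^{c\text{-side}},\, B \in G\}$ suitably phrased in terms of the clopen sets containing the respective filters. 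One then has to verify the quasimetric axioms (M1), (M2): the triangle inequality (M2) should follow from (QB4) together with a compactness/finite-cover argument, and (M1) — that $q(F,G)=0$ iff $F=G$ — is where the extra hypotheses enter. Separability of $\mathcal B$ (countably many generators) makes the relevant infima attainable along a countable descending sequence of clopen neighbourhoods, so that (QB6) can be applied: if $q(F,G)=0$ then there is a single clopen $B$ with $G \in \iota(B)$ and $F \in \iota(B)$, and intersecting over all such $B$ forces $F = G$ since prime filters are separated by clopen sets.

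Next I would prove the central identity $p(A,B) = q(\iota(A),\iota(B))$ for all $A,B \in \mathcal B$. The inequality $q(\iota(A),\iota(B)) \leq p(A,B)$ is immediate from the definition of $q$ (take $U = \iota(A)$, $V = \iota(B)$) together with monotonicity, using (QB2) to pass from the Hausdorff supremum over points of $\iota(A)$ to the value $p(A,B)$. The reverse inequality $p(A,B) \leq q(\iota(A),\iota(B))$ is the substantive direction: given $F \in \iota(A)$, for each $\epsilon$ one has clopen neighbourhoods witnessing $q(F,\iota(B)) < q(F,\iota(B)) + \epsilon$, and here condition (QB5) — the decomposition property — is exactly what lets one reassemble these local estimates over a finite subcover of the compact set $\iota(A)$ into a global bound on $p(A,B)$. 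Concretely: cover $\iota(A)$ by finitely many clopen $A_1, \dots, A_n$ (with $\iota^{-1}(A_k) =: a_k$, so $A = \bigvee_k a_k$ by compactness and the fact that clopen sets form a basis) on each of which $p(a_k, B)$ is controlled; then (QB2) gives $p(A,B) = \bigvee_k p(a_k,B)$, and (QB5) handles the interaction when the local witness sets for $B$ differ from clopen $B$ itself, by splitting $a_k$ accordingly. The min-space-flavoured condition (QB6) is again invoked to ensure the infimum defining $q(F,\iota(B))$ behaves well — i.e.\ that ``$q(F,\iota(B)) \leq d$ for all $d>$ something'' collapses to an actual membership statement.

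I expect the main obstacle to be precisely the reverse inequality $p(A,B) \leq q(\iota(A),\iota(B))$, and within it the bookkeeping needed to combine the compactness argument with the decomposition axiom (QB5). The delicate point is that $q$ is defined pointwise via infima over clopen neighbourhoods, while $p$ is an algebraic quantity on $\mathcal B$; bridging the two requires that the ``best'' clopen approximations to a prime filter, taken from a countable generating family, actually achieve (or approach arbitrarily closely, then by (QB6) achieve) the relevant distance, and that these local achievements can be glued. One has to be careful that $A$ itself may decompose nontrivially with respect to the target $B \cup \cdots$; this is where (QB5) does real work, and getting the induction/finite-cover structure right — in particular ensuring that the $A_B, A_C$ supplied by (QB5) remain within the already-chosen finite cover — is the fiddly core of the argument. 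Once the identity $p = q \circ (\iota \times \iota)$ is established, injectivity of $\iota$ is inherited from Stone's theorem and the quasimetric space axioms have already been checked, so the theorem follows.
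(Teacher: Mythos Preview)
Your overall architecture (Stone space, then manufacture a quasimetric on prime filters whose Hausdorff quasidistance recovers $p$) matches the paper, but the concrete definition you propose for $q$ is fatally wrong. You set
\[
q(F,G)\;=\;\inf\{\,p(A,B):\ F\in\iota(A),\ G\in\iota(B)\,\},
\]
an infimum over clopen neighbourhoods of \emph{both} points. Since $p$ is antitone in its second argument (QB3) and $p(A,1)=0$ for every $A$ by (QB1), taking $B=1$ (i.e.\ $V=W$) shows that this infimum is identically $0$. So your $q$ is the zero map, (M1) fails, and nothing that follows can work. The correct construction --- and this is what the paper does --- is asymmetric: first define a point-to-set distance $q(a,B)=\inf_{a\in A}p(A,B)$ by shrinking neighbourhoods of $a$, and then a point-to-point distance $q(a,b)=\sup_{b\in B}q(a,B)$ by shrinking neighbourhoods of $b$; the supremum is essential precisely because $p(A,\cdot)$ decreases as the second argument grows.

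Once that is fixed, your picture of where the hypotheses enter is also slightly off. In the paper, (QB5) is not used to glue a finite cover of $\iota(A)$; rather, it is used to prove the distributivity $q(a,B\cup C)=q(a,B)\wedge q(a,C)$ for the point-to-set distance, and \emph{this} fact, together with compactness (finite clopen cover of $B$, not of $A$), yields $q(a,B)=\inf_{b\in B}q(a,b)$. The link between $p(A,B)$ and the pointwise $q(a,B)$ on the $A$-side comes instead from separability and (QB2): one finds a descending sequence $A=A_0\supset A_1\supset\cdots$ with $\bigcap_iA_i=\{\bar a\}$ and $p(A_i,B)=p(A,B)$ for all $i$, so $p(A,B)=q(\bar a,B)=\max_{a\in A}q(a,B)$. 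Your compactness-on-$A$ plan does not obviously produce this, and (QB5) is not the tool for that step.
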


\begin{proof}
Let $W$ be the compact topological space consisting of the prime filters of $\mathcal B$, such that $\mathcal B$ can be identified with the set of clopen sets endowed with the set-theoretical operations. Note that, by construction, each sequence $A_0 \supseteq A_1 \supseteq \ldots$ of elements of $\mathcal B$ has a non-empty intersection; and conversely, as $\mathcal B$ is separable, there is for each $w \in W$ a sequence $A_0 \supseteq A_1 \supseteq \ldots$ in $\mathcal B$ such that $\bigcap_i A_i = \{w\}$.

For $a \in W$ and $B \in {\mathcal B}$, we put
\begin{equation} \label{fml:q-1}
q(a,B) \;=\; \inf_{{a \in A,} \atop {A \in {\mathcal B}}} p(A,B),
\end{equation}
and for $a, b \in W$, we put
\begin{equation} \label{fml:q-2}
q(a,b) \;=\; \sup_{{b \in B,} \atop {B \in {\mathcal B}}} q(a,B).
\end{equation}
In the rest of the proof, we will omit the explicit reference to $\mathcal B$; capital Latin letters will always vary over elements of $\mathcal B$. Our first aim is to show that $q \colon W \times W \to \ExtReals$ is a quasimetric.

To see (M1), let $a, b \in W$ such that $q(a,b) = 0$. Let $B$ be such that $b \in B$. Then $\inf_{a \in A} p(A,B) = 0$. Hence there is a sequence $A_0 \supseteq A_1 \supseteq \ldots$ such that $a \in A_i$ for all $i$ and $\inf_i p(A_i, B) = 0$. We conclude from (QB6) that $p(A_j, B) = 0$ for some $j$; from (QB1) that $A_j \subseteq B$; and hence $a \in B$. As $B$ is an arbitrary set containing $b$, it follows $a = b$.

To see (M2), let $a, b, c \in W$. We calculate
\begin{align*}
q(a,b) + q(b,c) \;
& =\; \sup_{b \in B} \inf_{a \in A} p(A, B) + \sup_{c \in C} \inf_{b \in B'} p(B', C) \\
& =\; \sup_{c \in C} \inf_{b \in B'} (\sup_{b \in B} \inf_{a \in A} p(A, B) + p(B', C)) \\
& \geq \sup_{c \in C} \inf_{b \in B'} (\inf_{a \in A} p(A, B') + p(B', C)) \\
& =\; \sup_{c \in C} \inf_{b \in B'} \inf_{a \in A} (p(A, B') + p(B', C)) \\
& \geq \sup_{c \in C} \inf_{a \in A} p(A, C) \\
& =\; q(a,b).
\end{align*}
It remains to prove that
\begin{equation} \label{fml:qAB}
p(A,B) \;=\; \sup_{a \in A} \inf_{b \in B} q(a,b)
\end{equation}
holds for any $A, B \in {\mathcal B}$. To this end, we first prove some auxiliary facts.

(a) Let $a \in W$ and $B, C \in {\mathcal B}$. Then $q(a, B \cup C) = q(a,B) \wedge q(a,C)$.

By (QB3), $q(a, B \cup C) \leq q(a, B) \wedge q(a, C)$. Let $\epsilon > 0$ and choose some $A$ such that $a \in A$ and $q(a, B \cup C) \leq q(A, B \cup C) \leq q(a, B \cup C) + \epsilon$. By (QB5), there are $A', A''$ such that $A = A' \cup A''$ and $q(A', B) \vee q(A'', C) = q(A, B \cup C)$. If $a \in A'$, we have $q(a, B) \leq q(A', B) \leq q(A, B \cup C) \leq q(a, B \cup C) + \epsilon$. Taking into account the possibility $a \in A''$ as well, we conclude $q(a, B) \wedge q(a, C) \leq q(a, B \cup C) + \epsilon$. As $\epsilon$ is arbitrary, (a) follows.

(b) Let $a \in W$ and $B \in {\mathcal B}$. Then $q(a,B) = \inf_{b \in B} q(a,b)$.

Clearly, $q(a,B) \leq q(a,b)$ for any $b \in B$. Assume that $c \leq q(a,b)$ for any $b \in B$. Let $\epsilon > 0$. For each $b \in B$, we have $q(a,b) = \sup_{b \in C} q(a, C) = \sup_{b \in C, \; C \subseteq B} q(a, C)$ and hence we may choose a $C_b \subseteq B$ containing $b$ such that $c \leq q(a, C_b) + \epsilon$. By compactness, there are finitely many elements $b_1, \ldots, b_k$ such that $C_{b_1} \cup \ldots \cup C_{b_k} = B$. We conclude $c \leq q(a, C_{b_1}) \wedge \ldots \wedge q(a, C_{b_k}) + \epsilon = q(a, B) + \epsilon $ by (a). As $\epsilon$ is arbitrary, $c \leq q(a,B)$, and (b) follows.

(c) Let $A, B \in {\mathcal B}$. Then there is an $\bar a \in A$ such that $p(A, B) = q(\bar a, B)$.

By the separability of $\mathcal B$ and by (QB2), there is a sequence $A = A_0 \supset A_1 \supset \ldots$ in $\mathcal B$ such that $\bigcap_i A_i = \{\bar a\}$ for some $\bar a \in W$ and $p(A, B) = p(A_i, B)$ for any $i$. Then $q(\bar a, B) = \inf_{\bar a \in A'} p(A', B) = \inf_i p(A_i, B) = p(A, B)$ and (c) is shown.

We finally turn to the proof of (\ref{fml:qAB}). Let $A, B \in {\mathcal B}$. By (c), there is an $\bar a \in A$ such that $p(A, B) = q(\bar a, B)$; consequently, $p(A, B) = \max_{a \in A} q(a, B)$. By (b), we have $q(a,B) = \inf_{b \in B} q(a,b)$ for each $a \in A$. The assertion follows.
\end{proof}

\section{A completeness theorem for \LAE}
\label{sec:Completeness-LAE}

This section contains the main result of the paper: we show that the calculus \pLAE{} is complete with respect to the entailment relation of \LAE.

The proof of the completeness theorem will rely on the notion of a proof forest, defined in Section \ref{sec:Rules}, as well as on the representation of quasimetric Boolean algebras, contained in Section \ref{sec:quasimetric-BA}. In addition, we will need a couple of auxiliary results on proof forests (Lemma \ref{lem:smallest-weight}--\ref{lem:smallest-weight-whole}).

\begin{lemma} \label{lem:smallest-weight}
Let $\mathcal T$ be a finite theory and let $\zeta, \eta$ be Boolean formulas such that ${\mathcal T} \proves \zeta \implc{s} \eta$ for some $s \in \Reals^+$. Then there is an $r \in \Reals^+$ with the following properties: {\rm (i)} $r$ is the smallest value such that ${\mathcal T} \proves \zeta \implc{r} \eta$ and {\rm (ii)} $r$ is a finite sum of weights of implications in $\mathcal T$.
\end{lemma}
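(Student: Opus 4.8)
The plan is to work with forest proofs rather than with derivations in \pLAE{} directly, using the equivalence established at the end of Section \ref{sec:Rules}. Since ${\mathcal T} \proves \zeta \implc{s} \eta$, by Theorem \ref{thm:tob-proof-2} (after first replacing $\mathcal T$ by a basic theory ${\mathcal T}_b$ — this is harmless since the two theories prove the same implications, and ${\mathcal T}_b$ is still finite because each implication of $\mathcal T$ spawns only finitely many basic implications) there is a forest proof $F$ of $\zeta \implc{s} \eta$ from ${\mathcal T}_b$. The key observation is that the \emph{length} of a forest proof, as defined in Definition \ref{def:proof-forest}, is the maximum over its branches of the sum of the edge weights along that branch, and that by condition (T4), cases (A) and (B), every edge weight appearing in $F$ is either $0$ or equals the degree $d = c$ of some basic implication of ${\mathcal T}_b$. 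Hence the length of \emph{any} branch of $F$ — and therefore the length of $F$ itself — is a finite sum of degrees of implications of ${\mathcal T}_b$, each of which is in turn a degree of an implication of $\mathcal T$. Writing $r$ for this length, we have $r \leq s$ by (T3), and $F$ is itself a forest proof of $\zeta \implc{r} \eta$; by Theorem \ref{thm:tob-proof-1}, ${\mathcal T} \proves \zeta \implc{r} \eta$. This already gives a value of the desired form (ii) that is provable.

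To obtain the \emph{smallest} such value and establish (i), I would argue that the set
\[
D \;=\; \{\, t \in \Reals^+ \colon {\mathcal T} \proves \zeta \implc{t} \eta \,\}
\]
has a minimum. First, $D$ is nonempty (it contains $s$, hence the $r$ just produced) and is upward closed by rule (R3), so it is of the form $[r_0, \infty)$ or $(r_0, \infty)$ for $r_0 = \inf D \geq 0$. Next, I claim every element of $D$ that is realised by a forest proof is a finite sum of weights of implications of $\mathcal T$, by the argument of the previous paragraph; and since only finitely many such finite sums are $\leq s$ (there are finitely many implications in $\mathcal T$, hence finitely many degrees, and a bounded sum of nonnegative reals drawn from a finite set takes only finitely many values), the set
\[
D \cap [0, s] \;=\; \{\, t \in D \colon t \leq s \,\}
\]
is finite. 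A finite nonempty set of reals has a minimum; let $r$ be the minimum of $D \cap [0,s]$. Then $r = \min D$, $r$ satisfies (i), and by construction $r$ is a finite sum of weights of implications in $\mathcal T$, which is (ii).

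The one point that requires care is the claim that every $t \in D$ is (for the witnessing forest proof) a sum of degrees of implications of $\mathcal T$: a priori a forest proof of $\zeta \implc{t} \eta$ only has length $\leq t$, not $= t$, so $t$ itself need not be such a sum. This is why I phrase the conclusion in terms of $\min(D \cap [0,s])$: I do not assert that each $t \in D$ is a sum of weights, only that each $t \in D$ \emph{dominates} some $t' \in D$ that is a sum of weights (namely the actual length of the forest proof witnessing $t$), and that these $t'$ range over a finite set within $[0,s]$; the minimum of that finite set is then the minimum of all of $D$. I expect this bookkeeping — keeping straight the distinction between the parameter decorating an implication and the actual length of a proof forest, and verifying that finitely many sums of degrees lie below $s$ — to be the only real obstacle; everything else is a direct appeal to Theorems \ref{thm:tob-proof-1} and \ref{thm:tob-proof-2} and to the structure of condition (T4).
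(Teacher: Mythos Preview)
Your argument is correct and reaches the same conclusion as the paper, but by a genuinely different route. The paper does \emph{not} invoke forest proofs here; instead it introduces an auxiliary calculus \pLAE' obtained from \pLAE{} by deleting rule (R3) and replacing (R5) by
\[
\Rule{R5'} \frac{\alpha \implc{c} \gamma \quad \beta \implc{d} \gamma}{\alpha \lor \beta \implc{c \vee d} \gamma}.
\]
Given a \pLAE-proof of $\zeta \implc{s} \eta$, one strips out every use of (R3) and propagates the smaller degrees downstream, obtaining a \pLAE'-proof of $\zeta \implc{s'} \eta$ with $s' \leq s$; an easy induction over \pLAE'-derivations then shows that every degree arising is a finite sum of degrees from $\mathcal T$ (since (R6) adds, (R5') takes a max, and the remaining rules produce or preserve $0$). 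The \pLAE'-proof is converted back to \pLAE{} by inserting an instance of (R3) before each use of (R5'). Your approach trades this calculus manipulation for the forest-proof equivalence of Theorems~\ref{thm:tob-proof-1} and~\ref{thm:tob-proof-2}: the length of a forest proof already computes the minimal degree it certifies, and condition (T4) makes it transparent that every nonzero edge weight is a degree from $\mathcal T$, so the length is a finite sum of such degrees. Both arguments then finish with the same finiteness observation --- bounded sums of nonnegative reals drawn from a finite set of values take only finitely many values --- to conclude that the infimum of $D$ is attained. The paper's route is more self-contained and avoids the detour through basic theories; yours reuses machinery already in place and avoids introducing a second calculus.
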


\begin{proof}
Let \pLAE' be the calculus obtained modifying \pLAE{} in the following way: rule (R3) is dropped; and (R5) is replaced by
\[ \Rule{R5'} \frac{\alpha \implc{c} \gamma \quad \beta \implc{d} \gamma}%
{\alpha \lor \beta \implc{c \vee d} \gamma}. \]
Assume that, for some $s \in \Reals^+$, there is a proof of $\zeta \implc{s} \eta$ from $\mathcal T$ in \pLAE. From this proof, we drop every application of (R3) and decrease the weights of the subsequent implications accordingly, such that a proof in \pLAE' is obtained. Let $\zeta \implc{s'} \eta$ be the proved implication. Then $s' \leq s$ and $s'$ is the sum of finitely many reals each of which is the weights of some implication contained in $\mathcal T$. This proof may in turn transformed to a proof of $\zeta \implc{s'}$ in \pLAE. Indeed, we replace each application of (R5') by an application of (R3) followed by (R5).

We conclude that if $\zeta \implc{s} \eta$ is provable, then so is $\zeta \implc{s'} \eta$, where $s'$ is a sum of elements of a finite set of positive reals. The assertions follow.
\end{proof}

In the context of a forest proof, we will use the following additional notions. For a non-terminal clause $L$, we say that a literal $\lambda$ is {\it introduced} at $L$ if $L$ does not contain $\lambda$, but some child of $L$ does. Conversely, if $L$ contains $\lambda$ but no child of $L$ does so, we say that $\lambda$ is {\it dropped} at $L$.

Furthermore, assume that the literal $\lambda$ is contained in the non-terminal clause $L$ and one of the following conditions holds:
\vspace{-1ex}
\begin{enumerate}

\item Case (A) or (B) of condition (T4) applies at $L$. Moreover, with reference to the notation of Definition \ref{def:proof-forest}, $\lambda$ is not among $\lambda_1, \ldots, \lambda_n$. Finally, if $\lambda$ is contained in the child of $L$ indexed by $i \in \{1, \ldots, l\}$, $\lambda$ is one of $\mu_{i1}, \ldots, \mu_{im_i}$.

\item Case (C) applies at $L$. Moreover, $L \backslash \{\lambda\}$ is still inconsistent.

\item Case (D) applies at $L$. Moreover, $\lambda$ is not contained in any children of $L$.

\end{enumerate}
\vspace{-1ex}
Then we call $\lambda$ {\it unused} in $L$.

\begin{lemma} \label{lem:no-unused-literal}
Let there be a forest proof of $\zeta \implc{r} \eta$ from $\mathcal T$. Then there is a forest proof of $\zeta \implc{r} \eta$ from $\mathcal T$ such that there is no unused literal in any proper node.
\end{lemma}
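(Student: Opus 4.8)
The goal is to eliminate unused literals one at a time, shrinking the total number of literal-occurrences in the proof forest, and then argue that this process terminates. The key observation is that an unused literal $\lambda$ in a proper node $L$ can simply be deleted from $L$ without invalidating any of the conditions (T1)--(T4): the three clauses in the definition of ``unused'' are precisely the three ways in which the applicability of case (A)/(B), (C), or (D) of (T4) at $L$ does not actually rely on $\lambda$ being present in $L$.

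First I would fix a forest proof of $\zeta \implc{r} \eta$ from $\mathcal T$ and, among all forest proofs of $\zeta \implc{r} \eta$ from $\mathcal T$, pick one minimising the total number of occurrences of literals in its proper nodes. (This is a natural number, so a minimum exists.) Suppose for contradiction that some proper node $L$ contains an unused literal $\lambda$. I would replace $L$ by $L' = L \setminus \{\lambda\}$, leaving the edge weights, the tree shape, and all other nodes unchanged, and verify that the result is still a forest proof:

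\begin{itemize}
\item[(T1)] The roots are only made smaller, so if a root was a subset of a clause of $B_\zeta$ before, it still is. (If $L$ is itself a root, $L' \subseteq L \subseteq$ some clause of $B_\zeta$.)
\item[(T2)] $L$ is non-terminal, so terminal clauses are unaffected.
\item[(T3)] The weights are unchanged, so the length is unchanged.
\item[(T4)] This is the case analysis. If (A) or (B) applied at $L$ via a basic implication with premises $\lambda_1, \dots, \lambda_n$: since $\lambda \notin \{\lambda_1,\dots,\lambda_n\}$, we still have $\{\lambda_1,\dots,\lambda_n\} \subseteq L'$, so the implication is still applicable at $L'$; and since $\lambda$, wherever it occurred in a child, was one of the $\mu_{ij}$, deleting $\lambda$ from $L$ does not disturb the requirement that the $i$-th child be a subset of $\{\mu_{i1},\dots,\mu_{ik_i}\} \cup L$ (in case (A)) or of $\{\mu_{i1},\dots,\mu_{ik_i}\}$ (in case (B); here $\lambda$ did not come from $L$ anyway). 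If (C) applied at $L$: by hypothesis $L' = L\setminus\{\lambda\}$ is still inconsistent, so (C) still applies. If (D) applied at $L$ with splitting variable $\phi$: $\lambda$ occurs in no child, so deleting it from $L$ still leaves two children, one consisting of $\phi$ together with a subset of $L'$, one consisting of $\lnot\phi$ together with a subset of $L'$; note that $\lambda \ne \phi, \lnot\phi$ since $\lambda \in L$ would then force $\phi$ or $\lnot\phi$ into both children. Finally one should check that the node whose father is $L$ still satisfies its own instance of (T4): the father-condition for a child $K$ of $L$ only constrains $K$ relative to $L$ (as ``a subset of $L$'' or similar), and replacing $L$ by a subset $L'$ can only tighten these, so one must re-examine them. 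In cases (A) and (D) the child was allowed to contain literals of $L$, so shrinking $L$ could in principle break ``$K \subseteq \{\mu_{i1},\dots,\mu_{ik_i}\}\cup L$'' or ``$K = \{\phi\}\cup(\text{subset of }L)$''; but the definition of ``unused'' was set up exactly so that $\lambda$ does not occur in any child in those cases, so no child actually uses $\lambda$ and the child-conditions survive.
\end{itemize}

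The modified forest proof has strictly fewer literal-occurrences in its proper nodes, contradicting minimality. Hence no proper node contains an unused literal, which is the assertion. The main obstacle, and the only genuinely delicate point, is the bookkeeping in (T4): one must be careful that deleting $\lambda$ from $L$ does not break the child-side conditions of $L$'s children, and this is precisely why the three sub-cases in the definition of ``unused'' are phrased as they are — in case (A)/(B) $\lambda$ is required to lie among the $\mu_{ij}$ (so it is ``supplied'' by the basic implication rather than inherited from $L$), and in case (D) $\lambda$ must be absent from the children. With those stipulations in hand the verification is routine.

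\emph{Remark.} I would also note that the construction does not increase the length of the forest proof, so the passage to a forest proof without unused literals is cost-free; this will matter when these lemmas are chained together in the completeness proof.
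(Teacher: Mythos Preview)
Your argument has a genuine gap: you verify that (T4) still holds \emph{at} the modified node $L' = L \setminus \{\lambda\}$, but you never verify that (T4) still holds at the \emph{father} $F$ of $L$. The condition (T4) at $F$ constrains what the children of $F$ may look like, and $L$ is one of those children; when $L$ shrinks, this constraint can break. In cases (A) and (B) at $F$ the child is required only to be a subset of something, so shrinking is harmless; but if case (D) applies at $F$ with splitting variable $\phi$, then one child of $F$ must contain $\phi$ and the other must contain $\lnot\phi$. If $\lambda$ happens to be $\phi$ (or $\lnot\phi$) --- and nothing in the definition of ``unused in $L$'' prevents this, since that definition refers only to $L$ and its descendants --- then $L'$ no longer contains the required literal, and the modified graph is not a forest proof. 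Your minimality argument then fails, because the object you produce is not a competitor.

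The paper's proof confronts exactly this case and performs an additional surgery: when (D) applies at the father $F$ and $\lambda$ is (the literal corresponding to) the splitting variable, one observes that $L' \subseteq F$, deletes the sibling of $L$ together with its subtree, and contracts $L'$ into $F$ by connecting the children of $L'$ directly to $F$. Whatever case of (T4) applied at $L'$ then applies at $F$ (since $L' \subseteq F$), and the result is again a forest proof of $\zeta \implc{r} \eta$ from $\mathcal T$, with strictly fewer literal-occurrences. With this extra step your minimality framing would go through; without it the proof is incomplete. (Incidentally, your parenthetical ``the node whose father is $L$ still satisfies its own instance of (T4)'' is vacuous: (T4) at a child $K$ of $L$ speaks only of $K$ and $K$'s children and does not mention $L$ at all. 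What you actually re-examine there is the child-side of (T4) \emph{at $L$}, which you do handle correctly.)
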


\begin{proof}
Let the non-terminal clause $L$ contain the unused literal $\lambda$. We remove $\lambda$ from $L$. Then, evidently, condition (A), (B), (C), or (D), respectively, applies to the new node $L \backslash \{\lambda\}$ if this was the case before.

If $L \backslash \{\lambda\}$ is the root, we are done. Otherwise, let $L'$ be the father of $L \backslash \{\lambda\}$. $L'$ still fulfils the requirements of (T4) except for the case that, in the original proof forest, case (D) is applied to $L'$ and $\lambda$ is one of $\phi$ or $\lnot\phi$, where $\phi$ is the splitting variable. Then $L'$ has the child $L \backslash \{ \lambda \} \subseteq L'$ as well as a second child that contains the negation of $\lambda$. We remove the latter together with the subtree rooted at it; and we remove the node $L \backslash \{ \lambda \}$ and connect its children directly to $L'$. Evidently, the result is again a forest proof of $\zeta \implc{r} \eta$ from $\mathcal T$.

We repeat the same procedure as long as it is applicable.
\end{proof}

Note what it means that a forest proof does not have any unused variables. Namely, let $L$ be a non-terminal clause in such a forest proof. In case (A), if $L$ contains any literal $\lambda$ in addition to $\lambda_1, \ldots, \lambda_n$, then $\lambda$ is also contained in a child, where $\lambda$ is not among the respective literals $\mu_{i1}, \ldots, \mu_{ik_i}$. In case (B), we have $L \;=\; \{ \lambda_1, \ldots, \lambda_n \}$. In case (C), $L = \{ \phi, \lnot\phi \}$ for some variable $\phi$. Finally, in case (D), each $\lambda \in L$ is also contained in one of the two children of $L$.

By a {\it standard clause set} for a Boolean formula $\alpha$, we mean a clause set $B$ such that, for any variable $\phi$, the following holds: if $\phi$ is contained in a clause of $B$, then $\phi$ occurs in $\alpha$ positively, and if $\lnot\phi$ is contained in a clause of $B$, then $\phi$ occurs in $\alpha$ negatively.

\begin{lemma} \label{lem:no-extra-literal-in-root-and-leaves}
Let there be a forest proof of $\zeta \implc{r} \eta$ from $\mathcal T$. Then there is a forest proof of $\zeta \implc{r} \eta$ from $\mathcal T$ such that the following holds. For each variable $\phi$ and each root $L$, $\phi$ is contained in $L$ only if $\phi$ positively occurs in $\zeta$, and $\lnot\phi$ is contained in $L$ only if $\phi$ negatively occurs in $\zeta$. Similarly, for each variable $\phi$ and each leaf $L$, $\phi$ is contained in $L$ only if $\phi$ positively occurs in $\eta$, and $\lnot\phi$ is contained in $L$ only if $\phi$ negatively occurs in $\eta$.
\end{lemma}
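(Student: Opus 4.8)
The plan is to prove Lemma~\ref{lem:no-extra-literal-in-root-and-leaves} by combining the two earlier normalisation lemmas with a direct ``pruning'' of roots and leaves. First I would apply Lemma~\ref{lem:no-unused-literal} to obtain a forest proof of $\zeta \implc{r} \eta$ from $\mathcal T$ in which no proper node contains an unused literal; I would also keep in force the conclusions of Lemma~\ref{lem:widerspruch-at-leaves-only} (so that $\widerspruch$ occurs only at leaves) and Lemma~\ref{lem:contradictory-clauses-end} (so that inconsistent clauses are non-terminal with the single child $\widerspruch$). Hence I may assume every leaf is either $\widerspruch$ or a consistent clause. The target is then to show that, after possibly further modifications, each root is a subset of the set of literals occurring in $\zeta$ with the correct polarity, and each consistent leaf is a subset of the set of literals occurring in $\eta$ with the correct polarity.

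For the leaves, the argument runs upward from (T2): there is a clause set $B_\eta$ for $\eta$ such that each terminal clause includes a clause of $B_\eta$. We are free to choose $B_\eta$ to be a \emph{standard} clause set for $\eta$ (every Boolean formula has one, obtained from a disjunctive normal form after deleting inconsistent clauses and, within each clause, repeated literals). Now let $L$ be a consistent leaf and $K \subseteq L$ a clause of $B_\eta$. If $L$ contains a literal $\lambda$ not lying in $K$, then $\lambda$ is ``extra'' at the leaf, and I would simply delete $\lambda$ from $L$: since $K \subseteq L \setminus \{\lambda\}$, condition (T2) still holds with the same $B_\eta$. The deletion can, however, damage the father $L'$ of $L$ exactly in the way already handled in the proof of Lemma~\ref{lem:no-unused-literal}: if case (D) is applied at $L'$ with splitting variable $\phi$ and $\lambda \in \{\phi, \lnot\phi\}$, then removing $\lambda$ from $L$ makes that child illegal, so I excise the sibling child (with its subtree), suppress the node $L \setminus \{\lambda\}$, and reconnect its children to $L'$; in all other cases $L'$ is unaffected. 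Iterating over all extra literals in all consistent leaves — the process terminates since each step strictly decreases the total number of literals over all nodes — yields a forest proof in which every consistent leaf is a subset of a clause of the standard clause set $B_\eta$, hence contains only literals occurring in $\eta$ with the correct polarity; note (T3) is preserved because deleting literals and subtrees never increases branch lengths, and (T4) at any other node is preserved because deleting a literal from a single node $L$ cannot create an unused literal elsewhere and cannot violate the subset conditions in cases (A)--(D) (those conditions are ``upper bounds'' on children relative to the father).

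For the roots, the argument is dual but uses (T1): there is a clause set $B_\zeta$ for $\zeta$ such that each clause $L$ of $B_\zeta$ contains a root. Again choose $B_\zeta$ standard. Let $L_0$ be a root and let $L \in B_\zeta$ with $L_0 \subseteq L$. If $L_0$ contains a literal $\lambda$ that does not occur in $\zeta$ with the matching polarity, then $\lambda \notin L$, and the normalisation of Lemma~\ref{lem:no-unused-literal} together with the remark following it tells us that in case (A) any literal of $L_0$ beyond $\lambda_1,\dots,\lambda_n$ is passed down to a child, in case (B) $L_0 = \{\lambda_1,\dots,\lambda_n\}$, in case (C) $L_0 = \{\phi,\lnot\phi\}$, and in case (D) every literal of $L_0$ appears in a child — so in each case one checks that $\lambda$ may be deleted from $L_0$ (and, by recursion, from the descendants into which $\lambda$ propagates) exactly as in the proof of Lemma~\ref{lem:no-unused-literal}, while (T1) still holds with the same $B_\zeta$ since $L_0 \setminus \{\lambda\} \subseteq L$. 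The main obstacle, and the place requiring care, is bookkeeping the interaction of these deletions with case (D) nodes: deleting a literal from a node can turn a legitimate ``split'' into an illegitimate one, and one must handle this by the collapse-and-reconnect move above and verify that after the collapse no new unused literal or extra root literal has been introduced. Once both passes are complete, the resulting proof forest satisfies all of (T1)--(T4) and has the stated polarity restrictions on roots and leaves, which is the claim.
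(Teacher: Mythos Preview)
Your leaf argument is essentially sound and close in spirit to the paper's (the paper first \emph{extends} each leaf via (T4)(D) until it covers a clause of a standard $B_\eta$ and then shrinks, whereas you shrink directly). One point you gloss over: the assertion ``we are free to choose $B_\eta$ to be a standard clause set'' is not automatic from (T2); it requires the monotonicity observation that if a consistent clause $L$ implies $\eta$, then the subclause of $L$ consisting of literals whose polarity matches an occurrence in $\eta$ still implies $\eta$. This is true but deserves a line of justification.

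The root argument, however, has a genuine gap. You write ``choose $B_\zeta$ standard'' and then pick $L\in B_\zeta$ with $L_0\subseteq L$; but (T1) only guarantees that every clause of $B_\zeta$ \emph{contains} some root, not that every root is contained in some clause of $B_\zeta$, and after replacing $B_\zeta$ by a standard clause set there is no reason the roots of the given forest are still covered. More seriously, your deletion strategy fails when the offending literal is \emph{used} at the root: after applying Lemma~\ref{lem:no-unused-literal}, every literal in a root is used, and in particular a bad-polarity literal may be one of the $\lambda_i$ of the basic implication in case (A) or (B). Concretely, take $\zeta=\true$ with $B_\zeta=\{\{\phi\},\{\lnot\phi\}\}$, $\mathcal T=\{\phi\implc{1}\chi,\ \lnot\phi\implc{1}\chi\}$, $\eta=\chi$, and the obvious two-tree proof. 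The root $\{\phi\}$ contains $\phi$, which does not occur in $\zeta$, yet $\phi$ is the antecedent literal of the (B)-step and cannot be deleted without destroying (T4).

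The paper avoids this by the opposite construction: it does not prune the existing roots but \emph{builds new roots} from a standard clause set $B_\zeta$, grows each of them downward by repeated (T4)(D) splits until every resulting leaf contains one of the original roots, and then grafts the old subtrees underneath. This works because $\bigwedge L\to\zeta\to\bigvee_{\text{old roots}}\bigwedge L_0$ for each $L$ in the standard $B_\zeta$, so sufficiently many splits always reach an old root; and it never requires removing a used literal. You should replace your root pass by this top-down construction.
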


\begin{proof}
Let $B_\zeta$ be a standard clause set for $\zeta$. We construct a new forest proof as follows. Let $L$ be a clause in $B_\zeta$. Then $L$ is chosen as a root of the new proof. We apply to $L$ (T4)(D) as many times as necessary such that eventually each leaf includes a root of the original proof. For each such leaf $L'$, we connect to $L'$ all children of the corresponding root of the original proof, together with the subtrees rooted at them. Proceeding in the same way for each clause in $B_\zeta$, the requirement of the first part of the lemma are fulfilled.

Let $B_\eta$ be a standard clause set for $\eta$. Then we apply (T4)(D) at each leaf as many times as necessary such that each leaf $L$ includes a clause $\bar L$ in $B_\eta$. Next we replace $L$ with its subset $\bar L$. If, in the original forest, (T4)(D) applied to the father $L'$ of $L$ and the splitting variable is now missing, we remove all descendants of $L'$, and we apply the same procedure to $L'$, which then becomes a leaf including $\bar L$. In this way, we successively fulfil also the second part.
\end{proof}

\begin{lemma} \label{lem:case-distinction-only-for-new-variables}
Let there be a forest proof of $\zeta \implc{r} \eta$ from $\mathcal T$. Then there is a forest proof such that the following holds. Let $L$ be a non-terminal clause to which case {\rm (D)} of condition {\rm (T4)} applies. Then the splitting variable $\phi$ does not occur in $L$.
\end{lemma}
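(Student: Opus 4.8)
The goal is to modify a given forest proof so that whenever case (D) of (T4) is applied at a non-terminal clause $L$ with splitting variable $\phi$, the literals $\phi$ and $\lnot\phi$ do not already occur in $L$. I would first invoke Lemma \ref{lem:no-unused-literal} to assume the given forest proof has no unused literal in any proper node. Then I would proceed by examining each application of case (D) in turn, and argue that a violation of the desired property can be locally eliminated.

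\textbf{Key steps.} Suppose case (D) is applied at a non-terminal clause $L$ with splitting variable $\phi$, and suppose $\phi \in L$ (the case $\lnot\phi \in L$ is symmetric). By the definition of case (D), $L$ has exactly two children: one of the form $L_1 = \{\phi\} \cup L'_1$ with $L'_1 \subseteq L$, and one of the form $L_2 = \{\lnot\phi\} \cup L'_2$ with $L'_2 \subseteq L$. Since $\phi \in L$ and the forest has no unused literal, by the characterisation recorded after Lemma \ref{lem:no-unused-literal} the literal $\phi$ must be contained in both children of $L$; but $L_2$ contains $\lnot\phi$, and if it also contained $\phi$ it would be an inconsistent clause. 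In that situation I would argue that $L_2$ can be handled purely as an inconsistent clause: by Lemma \ref{lem:contradictory-clauses-end} we may assume every inconsistent clause is non-terminal with the single child $\widerspruch$, so the subtree rooted at $L_2$ contributes nothing essential. The cleaner route, however, is simply to observe that $L_2$ is then redundant: removing the child $L_2$ together with its entire subtree, and replacing the remaining structure at $L$ so that $L_1$ is connected to $L$ via an edge of weight $0$ — more precisely, removing the node $L$ altogether and connecting $L_1$'s children directly to the father of $L$ (adjusting as in the proof of Lemma \ref{lem:no-unused-literal} if the father's case (D) splitting variable goes missing) — yields a forest proof in which this application of (D) has disappeared. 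Since each such operation strictly decreases the number of case-(D) applications that violate the property (and does not create new ones, as it only deletes nodes and re-roots subtrees at shallower positions), iterating terminates and produces the desired forest proof of $\zeta \implc{r} \eta$ from $\mathcal T$. Throughout one checks that conditions (T1)--(T3) are preserved: (T1) because roots are untouched or, if $L$ was a root, $L_1 \supseteq$ some root-condition clause still works after suitable bookkeeping; (T2) because the set of leaves only shrinks or stays the same up to the re-rooted subtrees; and (T3) because edge weights are never increased.

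\textbf{Main obstacle.} The delicate point is the bookkeeping when the clause $L$ at which the offending (D) is applied happens itself to be a \emph{child via case (D)} of its father $L'$, with splitting variable, say, $\psi$: deleting $L$ and rewiring $L_1$ to $L'$ may cause $\psi$ or $\lnot\psi$ to vanish from $L'$'s children, so that $L'$ no longer satisfies case (D). This is exactly the phenomenon already managed in the proof of Lemma \ref{lem:no-unused-literal}, and I would handle it the same way: remove the sibling of $L$ that carries the other $\psi$-literal, delete $L$, attach $L_1$'s children to $L'$, and if necessary recurse upward. One must also be careful about the case $L$ is a root, where after deletion $L_1$ becomes a root and (T1) must be re-verified against a possibly different clause set for $\zeta$ — but since $L_1 \subseteq \{\phi\}\cup L = L$ (using $\phi \in L$), $L_1$ is a subset of the same clause of $B_\zeta$, so (T1) is unproblematic. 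I expect the argument to be essentially a reprise of the manipulations in Lemmas \ref{lem:no-unused-literal} and \ref{lem:no-extra-literal-in-root-and-leaves}, with the only genuine content being the observation that when $\phi\in L$ and no literal is unused, one of the two (D)-children must be inconsistent and hence eliminable.
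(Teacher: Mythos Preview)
Your central claim --- that after applying Lemma~\ref{lem:no-unused-literal}, the literal $\phi$ must lie in \emph{both} children of $L$ --- is incorrect. The ``no unused literal'' condition in case~(D) says only that each literal of $L$ lies in \emph{at least one} child; since $\phi \in L_1$ by construction, this is satisfied automatically, and nothing forces $\phi \in L_2$. So you cannot conclude that $L_2$ is inconsistent, and the argument built on that observation collapses.

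The paper's proof is much shorter and avoids both this issue and your ``main obstacle''. It does not invoke Lemma~\ref{lem:no-unused-literal} at all. The key observation is simply that if $\phi$ (or $\lnot\phi$) already lies in $L$, then the child $L'$ of $L$ that carries that same literal satisfies $L' \subseteq L$. One then removes \emph{all} descendants of $L$ and reattaches the children of $L'$ directly to $L$ (with the same weights). Whatever case of (T4) applied at $L'$ now applies verbatim at $L$, because every condition in (A)--(D) is upward-closed under enlarging the parent clause. The node $L$ is kept, so there is no need to renegotiate (T4) at the father of $L$; your rewiring to the father, and the cascading adjustments it entails, are unnecessary complications. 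Iterating this local replacement eliminates every offending (D)-step.
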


\begin{proof}
Assume that $\phi$ does occur in $L$, that is, either $\phi$ or $\lnot\phi$ is in $L$. Let then $L'$ be the child of $L$ that is included in $L$. We remove all descendants of $L$ and we connect instead the children of $L'$ directly to $L$.

Proceeding in the same way in all applicable cases, we fulfil the requirements of the lemma.
\end{proof}

\begin{lemma} \label{missing-literal-not-in-tob-proof}
Let there be a forest proof of $\zeta \implc{r} \eta$ from $\mathcal T$. Let $\phi$ be a variable that does not occur negatively in $\zeta$ or in $\eta$; and for any implication $\alpha \implc{d} \beta$ contained in $\mathcal T$, $\phi$ does not occur negatively in $\alpha$ or $\beta$. Then there is a forest proof in which the literal $\lnot\phi$ does not occur.

A similar statement holds with respect to positive occurrences of $\phi$.
\end{lemma}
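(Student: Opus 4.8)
The plan is to eliminate all occurrences of the literal $\lnot\phi$ from a given forest proof by systematically deleting it wherever it appears, and then to check that each of the four cases of condition (T4) survives the deletion. First I would invoke the preceding normalisation lemmas: by Lemma~\ref{lem:no-unused-literal} we may assume there is no unused literal in any proper node, by Lemma~\ref{lem:no-extra-literal-in-root-and-leaves} no root contains a literal not occurring in $\zeta$ with the matching sign and no leaf contains a literal not occurring in $\eta$ with the matching sign, and by Lemma~\ref{lem:case-distinction-only-for-new-variables} no splitting variable occurs in the clause it is applied to. Under these assumptions, $\lnot\phi$ cannot appear in any root (since $\phi$ does not occur negatively in $\zeta$) nor in any leaf (since $\phi$ does not occur negatively in $\eta$).

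Next I would argue that, after the normalisations, the literal $\lnot\phi$ can only have been introduced at a node where case (D) of (T4) is applied with splitting variable $\phi$. Indeed, consider the topmost proper node $L$ on some branch at which $\lnot\phi$ first appears among the labels, i.e.\ $\lnot\phi \notin L$ but $\lnot\phi$ lies in a child $L'$ of $L$. If case (A) or (B) applies at $L$, then the basic implication used has conclusion-clauses built from the $\mu_{ij}$, and since $\phi$ does not occur negatively in the corresponding $\beta$, none of the $\mu_{ij}$ equals $\lnot\phi$; the only other literals that a child can contain are those already in $L$ (in case (B)) or those in $L$ together with the $\mu_{ij}$ (in case (A)); in either situation $\lnot\phi \in L'$ forces $\lnot\phi \in L$, a contradiction. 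Case (C) gives $L' = \widerspruch$, which carries no literal. So $L$ must be a (D)-node, and by Lemma~\ref{lem:case-distinction-only-for-new-variables} the splitting variable there is indeed $\phi$, the child $L'$ being the one formed from $\phi$ (resp.\ $\lnot\phi$) together with a subset of $L$; since $\phi \notin L$, it is precisely the branch through the $\lnot\phi$-child that first carries $\lnot\phi$.

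Now I would perform the surgery. At each such (D)-node $L$ with splitting variable $\phi$, delete the $\lnot\phi$-child together with its whole subtree, and splice the children of the remaining $\phi$-child directly onto $L$ (equivalently: bypass the node $L$ altogether, since after normalisation the $\phi$-child is just $L \cup \{\phi\}$ and we now drop both the case distinction and the literal). Removing the $\phi$-literal as well is harmless: it was not present in $L$, and we now also delete it from every descendant in which it occurs, re-checking (T4) downward exactly as in the proofs of Lemmas~\ref{lem:no-unused-literal} and~\ref{lem:case-distinction-only-for-new-variables}. Condition (T3) is preserved because we only shorten branches (the deleted edge had weight $0$, being a (D)-edge); condition (T1) holds because no root was altered; condition (T2) holds because the new leaves are subsets of old leaves (or old leaves), and a standard clause set for $\eta$ contains no clause mentioning $\phi$ negatively, so removing $\lnot\phi$ from a leaf keeps it $\supseteq$ some clause of $B_\eta$. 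Iterating until no $\lnot\phi$ remains yields the desired forest proof; the statement about positive occurrences of $\phi$ is symmetric, deleting the $\phi$-literal instead.

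The main obstacle I anticipate is the bookkeeping in case (A): after deleting $\lnot\phi$ from a non-terminal clause $L$ that uses a basic implication with some $\mu_{ij}$-clauses, one must verify that each surviving child is still of the form $L' \subseteq \{\mu_{i1},\dots,\mu_{ik_i}\} \cup L$ with the $\{\lambda_1,\dots,\lambda_n\} \subseteq L$ condition intact — this is fine since $\lnot\phi$ is neither a $\lambda_j$ (it is not in $\alpha$ negatively) nor a $\mu_{ij}$, but it requires care to present cleanly. Everything else is routine propagation of the deletion down the forest, of the same flavour as the earlier lemmas in this section.
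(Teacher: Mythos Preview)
Your surgery has a genuine gap. After deleting the $\lnot\phi$-child and splicing the children of the $\phi$-child $L_1$ directly onto $L$, you need condition (T4) to hold at $L$ with whatever case applied at $L_1$; to make this work you propose to delete the literal $\phi$ from every descendant in which it appears. But the hypothesis of the lemma only excludes \emph{negative} occurrences of $\phi$ in $\zeta$, $\eta$, and in the implications of $\mathcal T$. The positive literal $\phi$ may perfectly well be one of the $\lambda_j$ in the antecedent of some basic implication from $\mathcal T$ that is used, via case (A) or (B), at a descendant $M$ of $L_1$. At such a node the requirement $\{\lambda_1,\dots,\lambda_n\}\subseteq M$ forces $\phi\in M$, so deleting $\phi$ from $M$ destroys (T4). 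Your appeal to ``re-checking (T4) downward exactly as in the proofs of Lemmas~\ref{lem:no-unused-literal} and~\ref{lem:case-distinction-only-for-new-variables}'' does not help: those lemmas only remove \emph{unused} literals, whereas here $\phi$ may be essential. (A minor side issue: even after Lemma~\ref{lem:no-unused-literal}, the $\phi$-child need not equal $L\cup\{\phi\}$; each literal of $L$ is only guaranteed to lie in \emph{some} child.)

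The paper's proof avoids this by working the other way round. One also applies Lemma~\ref{lem:contradictory-clauses-end}, picks an introduction point $L$ below which $\lnot\phi$ is not introduced again, and considers the maximal subtree $S$ rooted at the $\lnot\phi$-child $L_2$ whose nodes all contain $\lnot\phi$. Because $\lnot\phi$ can be neither a $\lambda_j$ nor a $\mu_{ij}$ and there are no unused literals, only cases (A) and (D) occur at non-terminal nodes of $S$ (so all edges in $S$ have weight $0$) and every leaf of $S$ is $\{\phi,\lnot\phi\}$ with sole child $\widerspruch$. The surgery then removes the subtree rooted at $L_1$, replaces $\lnot\phi$ in every node of $S$ by the literals of $L$ (so $L_2$ merges with $L$ and each leaf of $S$ becomes $L\cup\{\phi\}\supseteq L_1$), and finally grafts the former children of $L_1$, together with their untouched subtrees, onto those leaves in place of $\widerspruch$. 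In this way the literal $\phi$ is never removed from any clause in which it might be needed, and the problem you ran into does not arise.
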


\begin{proof}
We will show the first part; for the second part we can argue similarly. Accordingly, assume that $\phi$ does not occur negatively in $\zeta$ or $\eta$, and for any element of $\mathcal T$ given in the form (\ref{fml:basic-implication}), assume that the literal $\lnot\phi$ does not occur.

Applying successively Lemmas \ref{lem:contradictory-clauses-end}, \ref{lem:case-distinction-only-for-new-variables}, \ref{lem:no-extra-literal-in-root-and-leaves}, and \ref{lem:no-unused-literal}, we can assume the following additional properties of the forest proof under consideration: any inconsistent clause has the only child $\widerspruch$; (T4)(D) is not used when the splitting variable is already present; $\lnot\phi$ does not occur in any root or leaf; and no unused literal occurs.

Assume that $\lnot\phi$ occurs in the forest proof. Let $L$ be a clause at which $\lnot\phi$ is introduced such that $\lnot\phi$ is not introduced at any descendant of $L$. At $L$, only case (D) can apply; hence $L$ has two children $L_1$ and $L_2$ such that $\phi \in L_1$ and $\lnot\phi \in L_2$. Let $S$ be the subtree whose root is $L_2$ and that is maximal w.r.t.\ the property that its nodes are clauses containing $\lnot\phi$. At the non-terminal nodes of $S$, then only cases (A) or (D) can apply. Furthermore, at each leaf $L'$ of $S$, $\lnot\phi$ is by assumption dropped. This means that case (C) is applied to $L' = \{ \phi, \lnot\phi \}$.

We now modify the proof forest with the effect that $\lnot\phi$ will no longer occur in any descendant of $L$. We remove the subtree rooted at $L_1$; and from each clause in $S$, we drop $\lnot\phi$ and add the elements of $L$ instead. The former node $L_2$ becomes $L$; we replace the two coinciding nodes by a single one. Moreover, each leaf $L'$ of $S$ is now $L \cup \{ \phi \}$ and hence includes $L_1$; we remove its only child $\widerspruch$ and connect the subtrees rooted at the (former) children of $L_1$, if there are any.

Since each weight in the subtree $S$ is $0$, the length of the modified proof forest is at most the length of the original one; thus we obtain a proof forest of $\zeta \implc{r} \eta$ from $\mathcal T$ again. We proceed in the same way as long as the literal $\lnot\phi$ is present.
\end{proof}

It is the next lemma where we make use of the special properties of proof forests that we have shown to be assumable.

\begin{lemma} \label{lem:extension-single}
Let $\mathcal T$ be a basic theory. Then there is a basic theory ${\mathcal T}'$ including $\mathcal T$ with the following property:
\AxiomeAnfang

\Axiom{E1} Let $\alpha, \beta, \gamma$ be Boolean formulas built up from variables that occur in $\mathcal T$. If ${\mathcal T} \proves \alpha \implc{d} \beta \lor \gamma$ for some $d \in \Reals^+$, then there are Boolean formulas $\alpha_\beta, \alpha_\gamma$ such that the following is provable from ${\mathcal T}'$:
\begin{equation} \label{fml:disjunction}
\alpha_\beta \implc{d} \beta, \quad \alpha_\gamma \implc{d} \gamma, \quad
\alpha_\beta \implc{0} \alpha, \quad \alpha_\gamma \implc{0} \alpha, \quad
\alpha \implc{0} \alpha_\beta \lor \alpha_\gamma.
\end{equation}

\Axiom{E2} Let $\zeta$ and $\eta$ be Boolean formulas built up from variables that occur in $\mathcal T$. If ${\mathcal T}' \proves \zeta \implc{r} \eta$, then ${\mathcal T} \proves \zeta \implc{r} \eta$.
\Axiom{E3} Every weight of an implication in ${\mathcal T}'$ is a finite sum of weights of implications in $\mathcal T$.
\AxiomeEnde
\end{lemma}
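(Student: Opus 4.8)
The plan is to construct $\mathcal{T}'$ explicitly by adding, for every provable disjunction, fresh witness formulas together with axioms that force the decomposition in (E1). First I would observe that since $\mathcal{T}$ is a finite basic theory over finitely many variables, there are, up to Boolean equivalence, only finitely many pairs $(\beta,\gamma)$ of disjuncts that can appear as the right-hand side of a basic implication, and only finitely many possible left-hand sides $\alpha$; hence only finitely many provable implications of the form $\alpha \implc{d} \beta\lor\gamma$ need to be considered, and by Lemma~\ref{lem:smallest-weight} we may take $d$ in each case to be the minimal provable degree, which is a finite sum of weights from $\mathcal{T}$. For each such triple I would introduce two new variables to build $\alpha_\beta$ and $\alpha_\gamma$, and adjoin to $\mathcal{T}$ the basic implications corresponding to the five clauses of (\ref{fml:disjunction}) (rewritten in basic form via the lemma preceding Theorem~\ref{thm:tob-proof-1}); iterating once over the finite list of triples yields the desired ${\mathcal T}'$, and (E3) holds by construction since every new weight is either $0$ or one of the minimal degrees $d$, which are finite sums of weights in $\mathcal{T}$. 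Property (E1) is then immediate.

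The real content is (E2): adding the witness formulas must not create new provable implications over the old vocabulary. Here I would invoke the forest-proof machinery. Suppose ${\mathcal T}' \proves \zeta \implc{r} \eta$ with $\zeta,\eta$ over the variables of $\mathcal{T}$; by Theorem~\ref{thm:tob-proof-2} there is a forest proof from ${\mathcal T}'$, and I want to transform it into a forest proof from $\mathcal{T}$. The new variables occur only positively (say $\phi$ occurs only positively in every implication of ${\mathcal T}'$ — this must be arranged when choosing how the witness variables enter the new axioms, e.g. by letting $\alpha_\beta$ be built so the fresh variables appear with a fixed sign) so that Lemma~\ref{missing-literal-not-in-tob-proof} applies: since the fresh variable $\phi$ does not occur negatively in $\zeta$, in $\eta$, or in any implication of ${\mathcal T}'$, there is a forest proof in which $\lnot\phi$ never occurs. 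Applying this for each fresh variable, we are left with a forest proof using, from the added axioms, only their positive-$\phi$ instances. One then checks that in such a proof the added axioms can be bypassed entirely: each added axiom of type $\alpha_\beta \implc{d}\beta$, $\alpha_\beta\implc{0}\alpha$, or $\alpha\implc{0}\alpha_\beta\lor\alpha_\gamma$ becomes, once the fresh literals are eliminated, a node whose defining basic implication is already derivable from $\mathcal{T}$ — because its "content" over the old variables is exactly an instance of the provable $\alpha\implc{d}\beta\lor\gamma$ together with \CPL-tautologies — so each such node can be replaced locally by a subproof from $\mathcal{T}$ of at most the same length.

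I would carry this out in the order: (1) fix the finite list of relevant triples and the minimal degrees via Lemma~\ref{lem:smallest-weight}; (2) define the fresh variables and the added basic implications, checking that the fresh variables occur with a single sign throughout ${\mathcal T}'$ and verifying (E1) and (E3) directly; (3) prove (E2) by taking a forest proof from ${\mathcal T}'$, normalising it via Lemma~\ref{missing-literal-not-in-tob-proof} to eliminate the "wrong-sign" fresh literals, and then arguing that every node justified by an added axiom can be re-justified from $\mathcal{T}$ — invoking Theorem~\ref{thm:tob-proof-1} to pass back to $\mathcal{T}\proves\zeta\implc{r}\eta$.

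The main obstacle I expect is step (3): one must be careful that after deleting the fresh negative literals, the nodes that used the added axiom $\alpha\implc{0}\alpha_\beta\lor\alpha_\gamma$ (which introduces fresh positive literals in a case-(A)-style split of the branch into an $\alpha_\beta$-part and an $\alpha_\gamma$-part) really do collapse — the two children must, after literal elimination, both become clauses over the old variables that are still subsets of clauses sitting in the two subtrees continuing the proof, and the degree bookkeeping (that the $d$-weighted edges from the $\alpha_\beta$-witness are absorbed correctly) must match the minimality of $d$. Making this local replacement while preserving conditions (T1)–(T4) and not increasing the forest length is the delicate part; the normalisation lemmas (especially Lemmas~\ref{lem:no-unused-literal}, \ref{lem:no-extra-literal-in-root-and-leaves}, \ref{lem:case-distinction-only-for-new-variables}) are exactly what make it go through, since they let us assume the fresh literals behave as tamely as possible before we try to excise them.
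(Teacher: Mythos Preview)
Your approach is essentially the paper's: for each triple $(\alpha,\beta,\gamma)$ with $\alpha$ a conjunction of literals and $d$ minimal (via Lemma~\ref{lem:smallest-weight}), introduce two fresh variables, adjoin the five implications (\ref{fml:disjunction}), and for (E2) take a forest proof from ${\mathcal T}'$, eliminate $\lnot\alpha_\beta,\lnot\alpha_\gamma$ via Lemma~\ref{missing-literal-not-in-tob-proof}, normalise via Lemmas~\ref{lem:no-extra-literal-in-root-and-leaves} and~\ref{lem:no-unused-literal}, and then excise the remaining positive fresh literals. The obstacle you correctly flag is exactly where the paper spends its effort, via a three-case surgery on the maximal subtrees $S_1,S_2$ in which $\alpha_\beta,\alpha_\gamma$ persist: either every leaf of $S_1$ (resp.\ $S_2$) exits through $\alpha_\beta\implc{0}\alpha$, in which case the other branch is deleted and this one collapsed by replacing $\alpha_\beta$ with the literals of $\alpha$; or both subtrees contain a leaf firing the $d$-weighted witness axiom, in which case the entire block below the splitting node is replaced by a single case-(B) step using the provable $\alpha\implc{d}\beta\lor\gamma$ --- so your ``local replacement'' is really a subtree replacement, but otherwise your plan and the paper's coincide.
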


\begin{proof}
Let $(\alpha, \beta, \gamma)$ be a triple of Boolean formulas built up from variables occurring in $\mathcal T$, such that $\alpha$ is the conjunction of literals. Assume that there is a $d \in \Reals^+$ such that ${\mathcal T} \proves \alpha \implc{d} \beta \lor \gamma$. Assume furthermore that $d$ is the smallest weight with this property; such a weight exists by Lemma \ref{lem:smallest-weight} and is a sum of weights of implications contained in $\mathcal T$. Assume finally that there are no two Boolean formulas $\alpha_\beta, \alpha_\gamma$ such that all five implications (\ref{fml:disjunction}) are provable from $\mathcal T$. Note that then $d > 0$.

Let then $\alpha_\beta$ and $\alpha_\gamma$ be variables that are not among those occurring in $\mathcal T$ and add the five basic implications (\ref{fml:disjunction}) to $\mathcal T$. Let ${\mathcal T}'$ arise from $\mathcal T$ by proceeding for each such triple in the indicated way, in each case using two new variables.

Then, by construction, ${\mathcal T}'$ fulfils (E1) restricted to conjunctions of literals $\alpha$ and minimal weights $d$. We readily check that ${\mathcal T}'$ actually fulfils (E1) in general. It is furthermore clear that ${\mathcal T}'$ fulfils also (E3).

To see that ${\mathcal T}'$ fulfils (E2), let $\zeta$ and $\eta$ contain only variables present in $\mathcal T$, and assume that ${\mathcal T}' \proves \zeta \implc{r} \eta$. By Theorem \ref{thm:tob-proof-2}, there is a forest proof of $\zeta \implc{r} \eta$ from ${\mathcal T}'$.

Let $(\alpha, \beta, \gamma)$ be one of the triples that gave rise to an extension of $\mathcal T$; let $\alpha$ be the conjunction of the literals $\lambda^\alpha_1, \ldots, \lambda^\alpha_n$, $n \geq 1$. That is, ${\mathcal T}'$ contains the implications (\ref{fml:disjunction}) and the variables $\alpha_\beta$ and $\alpha_\gamma$ occur exactly in these five implications. Assume that at least one of these variables occurs in the forest proof. We shall show that we can eliminate $\alpha_\beta$ and $\alpha_\gamma$. We can proceed in the same way for every pair of newly added variables and (E2) will follow.

By Lemma \ref{missing-literal-not-in-tob-proof}, we can assume that the proof forest does not contain the literals $\lnot\alpha_\beta$ and $\lnot\alpha_\gamma$. By Lemma \ref{lem:no-extra-literal-in-root-and-leaves}, we can furthermore assume that the variables $\alpha_\beta$ and $\alpha_\gamma$ do not occur in the roots and leaves. By Lemma \ref{lem:no-unused-literal}, we may finally assume that there is no unused literal.

Let $L$ be a clause at which $\alpha_\beta$ or $\alpha_\gamma$ is introduced. Then case (A) of condition (T4) applies to $L$ and $\alpha \implc{0} \alpha_\beta \vee \alpha_\gamma$ is the used implication. In particular, $L \supseteq \{\lambda^\alpha_1, \ldots, \lambda^\alpha_n\}$ and we can assume that $L$ has exactly two children: the child $L_1$ consisting of $\alpha_\beta$ and a subset of $L$, as well as the child $L_2$ consisting of $\alpha_\gamma$ and a subset of $L$. Let $S_1$ be the subtree whose root is $L_1$ and that is maximal w.r.t.\ the property that each node is proper and contains $\alpha_\beta$. Similarly, let $S_2$ be the subtree $S_2$ whose root is $L_2$ and that is maximal w.r.t.\ the property that each node is proper and contains $\alpha_\gamma$. Note that all weights in $S_1$ and $S_2$ are $0$. Moreover, at each leaf of $S_1$, $\alpha_\beta$ is dropped on the basis of $\alpha_\beta \implc{0} \alpha$ or $\alpha_\beta \implc{d} \beta$; similarly for $S_2$. We modify the proof forest as follows.

{\it Case 1:} At all leaves of $S_1$, case (A) is applied on the basis of $\alpha_\beta \implc{0} \alpha$. Then we remove the subtree rooted at $L_2$. Furthermore, in each node of $S_1$, we replace $\alpha_\beta$ by $\lambda^\alpha_1, \ldots, \lambda^\alpha_n$. As a consequence, some nodes $L'$ have a child $L'' \subseteq L'$; this is the case where, in the original proof forest, case (A) applies on the basis of $\alpha \implc{0} \alpha_\beta \lor \alpha_\gamma$ or $\alpha_\beta \implc{0} \alpha$. We then remove the descendants of $L'$ and connect the children of $L''$ directly to $L'$.

{\it Case 2:} At all leaves of $S_2$, case (A) is applied on the basis of $\alpha_\gamma \implc{0} \alpha$. Then we proceed similarly as in Case 1.

{\it Case 3:} There is a leaf $L_1' = \{\alpha_\beta\}$ of $S_1$ at which case (B) is applied on the basis of $\alpha_\beta \implc{d} \beta$, and a leaf $L_2' = \{\alpha_\gamma\}$ of $S_2$ at which case (B) is applied on the basis of $\alpha_\gamma \implc{d} \gamma$. We then remove all descendants from $L$ and connect to $L$ instead the subtrees rooted at the children of $L_1'$ and $L_2'$ with weight $d$. Then condition (B) applies to $L$, the used implication being $\alpha \implc{d} \beta \lor \gamma$.

We finally apply Lemma \ref{lem:no-unused-literal} to ensure that there are no unused literals, and we repeat the same procedure as long as the variables $\alpha_\beta$ and $\alpha_\gamma$ occur.
\end{proof}

Lemma \ref{lem:extension-single} was only a preparatory step for the subsequent Lemma, which contains the key argument for the proof of the completeness theorem for \LAE. Namely, we show that a theory $\mathcal T$ can be extended to a theory $\bar{\mathcal T}$ such that condition (E1) applies for all implications of the form $\alpha \implc{d} \beta \lor \gamma$ provable from $\bar{\mathcal T}$, not only for those provable in the original theory $\mathcal T$.

\begin{lemma} \label{lem:extension-whole}
Let $\mathcal T$ be a basic theory. Then there is a basic theory $\bar{\mathcal T}$ including $\mathcal T$ with the following property:
\AxiomeAnfang

\Axiom{F1} Let $\alpha, \beta, \gamma$ be Boolean formulas built up from variables that occur in $\bar{\mathcal T}$. If $\bar{\mathcal T} \proves \alpha \implc{d} \beta \lor \gamma$ for some $d \in \Reals^+$, then there are Boolean formulas $\alpha_\beta, \alpha_\gamma$ such that $\bar{\mathcal T}$ proves the following:
\begin{equation} \label{fml:disjunction-noch}
\alpha_\beta \implc{d} \beta, \quad \alpha_\gamma \implc{d} \gamma, \quad
\alpha_\beta \implc{0} \alpha, \quad \alpha_\gamma \implc{0} \alpha, \quad
\alpha \implc{0} \alpha_\beta \lor \alpha_\gamma.
\end{equation}

\Axiom{F2} Let $\zeta$ and $\eta$ be Boolean formulas built up from variables that occur in $\mathcal T$. If $\bar{\mathcal T} \proves \zeta \implc{r} \eta$, then $\mathcal T \proves \zeta \implc{r} \eta$.
\Axiom{F3} Every weight of an implication in $\bar {\mathcal T}$ is a finite sum of weights of implications in $\mathcal T$.
\AxiomeEnde
\end{lemma}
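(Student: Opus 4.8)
The natural approach is iteration: apply Lemma~\ref{lem:extension-single} repeatedly, producing a chain $\mathcal{T} = \mathcal{T}_0 \subseteq \mathcal{T}_1 \subseteq \mathcal{T}_2 \subseteq \cdots$, where each $\mathcal{T}_{n+1}$ arises from $\mathcal{T}_n$ as the $\mathcal{T}'$ guaranteed by Lemma~\ref{lem:extension-single}, and then set $\bar{\mathcal{T}} = \bigcup_n \mathcal{T}_n$. The point of iterating is that when Lemma~\ref{lem:extension-single} is applied to $\mathcal{T}_n$, the new variables it introduces may themselves occur in newly provable disjunctive implications; passing to $\mathcal{T}_{n+1}$ handles the witnesses demanded at ``stage $n$'', but those witnesses live among the variables of $\mathcal{T}_{n+1}$, so a further application is needed. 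In the limit every disjunctive implication provable from $\bar{\mathcal{T}}$ is already provable from some $\mathcal{T}_n$ (proofs are finite), and its witnesses were supplied in $\mathcal{T}_{n+1} \subseteq \bar{\mathcal{T}}$.

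The verification of the three conditions is then mostly bookkeeping. For (F1): suppose $\bar{\mathcal{T}} \proves \alpha \implc{d} \beta \lor \gamma$ with $\alpha,\beta,\gamma$ built from variables occurring in $\bar{\mathcal{T}}$. A proof is finite, hence uses only finitely many implications of $\bar{\mathcal{T}}$, all of which lie in some $\mathcal{T}_n$; likewise $\alpha,\beta,\gamma$ use only finitely many variables, all present in some $\mathcal{T}_m$. Taking $N = \max(n,m)$ we get $\mathcal{T}_N \proves \alpha \implc{d} \beta \lor \gamma$ with $\alpha,\beta,\gamma$ over variables of $\mathcal{T}_N$, so by (E1) for the step $\mathcal{T}_N \to \mathcal{T}_{N+1}$ the five implications~(\ref{fml:disjunction}) are provable from $\mathcal{T}_{N+1}$, hence from $\bar{\mathcal{T}}$; this gives~(\ref{fml:disjunction-noch}). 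For (F3): by induction on $n$, using (E3) at each step, every weight in $\mathcal{T}_n$ is a finite sum of weights of implications in $\mathcal{T}_{n-1}$, and unwinding, a finite sum of weights of implications in $\mathcal{T}$; since each implication of $\bar{\mathcal{T}}$ lies in some $\mathcal{T}_n$, (F3) follows. Also $\bar{\mathcal{T}}$ is basic, being a union of basic theories, and it includes $\mathcal{T} = \mathcal{T}_0$.

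The only subtle point is (F2). Here $\zeta,\eta$ are over variables of the \emph{original} $\mathcal{T}$, and $\bar{\mathcal{T}} \proves \zeta \implc{r} \eta$; we must descend all the way back to $\mathcal{T}$. Again a proof is finite, so $\mathcal{T}_n \proves \zeta \implc{r} \eta$ for some $n$. I would now argue by downward induction on $n$: it suffices to show that $\mathcal{T}_{k+1} \proves \zeta \implc{r} \eta$, with $\zeta,\eta$ over variables of $\mathcal{T}$ — in particular over variables of $\mathcal{T}_k$ — implies $\mathcal{T}_k \proves \zeta \implc{r} \eta$. But that is exactly condition (E2) for the step $\mathcal{T}_k \to \mathcal{T}_{k+1}$, \emph{provided} the variables of $\zeta$ and $\eta$ all occur in $\mathcal{T}_k$. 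This holds because the variables of $\mathcal{T}$ occur in every $\mathcal{T}_k$ (as $\mathcal{T} = \mathcal{T}_0 \subseteq \mathcal{T}_k$). Iterating the descent from $n$ down to $0$ yields $\mathcal{T} \proves \zeta \implc{r} \eta$, which is (F2).

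The step I expect to require the most care is making the downward induction for (F2) airtight: one must be sure that at each stage the formulas whose derivability is being pushed down still satisfy the hypothesis of (E2), namely that \emph{all} their variables occur in the smaller theory. Since $\zeta,\eta$ are fixed over variables of $\mathcal{T}$ throughout the descent, and every $\mathcal{T}_k$ contains $\mathcal{T}$'s variables, this condition is preserved automatically — but this is the place where the precise phrasing of (E2) ("variables that occur in $\mathcal{T}$", i.e.\ in the smaller theory of the pair) is doing real work, and it should be checked explicitly rather than glossed over.
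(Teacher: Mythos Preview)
Your proposal is correct and follows essentially the same route as the paper: define $\mathcal{T}_0=\mathcal{T}$, let $\mathcal{T}_{n+1}$ be the extension of $\mathcal{T}_n$ from Lemma~\ref{lem:extension-single}, set $\bar{\mathcal{T}}=\bigcup_n\mathcal{T}_n$, and verify (F1)--(F3) by the finiteness of proofs together with (E1)--(E3). Your write-up is in fact more explicit than the paper's on the two points that matter: taking $N=\max(n,m)$ so that both the proof and the variables of $\alpha,\beta,\gamma$ live in $\mathcal{T}_N$ before invoking (E1), and checking that the hypothesis of (E2) (variables of $\zeta,\eta$ occur in $\mathcal{T}_k$) is preserved throughout the downward induction because $\mathcal{T}\subseteq\mathcal{T}_k$.
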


\begin{proof}
We put ${\mathcal T}_0 = {\mathcal T}$, and for each $i \geq 0$, we let ${\mathcal T}_{i+1} = {\mathcal T}_i'$, the extension of ${\mathcal T}_i$ according to Lemma \ref{lem:extension-single}. We put $\bar{\mathcal T} = \bigcup_i {\mathcal T}_i$.

Let $\Phi$ be an implication containing only variables occurring in $\bar{\mathcal T}$. If $\bar{\mathcal T} \proves \Phi$, there is an $i$ such that $\Phi$ contains only variables occurring in ${\mathcal T}_i$ and ${\mathcal T}_i$ proves $\Phi$. Thus (F1) holds by condition (E1) of Lemma \ref{lem:extension-single}. Furthermore, assume $\bar{\mathcal T} \proves \zeta \implc{r} \eta$, where the variables occurring in $\zeta$ and $\eta$ all occur in $\mathcal T$. Then ${\mathcal T}_i \proves \zeta \implc{r} \eta$ for some $i$. By condition (E2) of Lemma \ref{lem:extension-single}, if $i \geq 1$, ${\mathcal T}_{i-1} \proves \zeta \implc{r} \eta$, and arguing subsequently in the same way we conclude that in fact $\mathcal T$ proves $\zeta \implc{r} \eta$, and (F2) follows. An inductive argument also shows (F3).
\end{proof}

The extension of a basic theory specified by Lemma \ref{lem:extension-whole} is easily seen to fulfil Lemma \ref{lem:smallest-weight} as well.

\begin{lemma} \label{lem:smallest-weight-whole}
Let $\mathcal T$ be a finite theory and let $\bar{\mathcal T}$ be the extension of $\mathcal T$ according to Lemma {\rm \ref{lem:extension-whole}}. Let $\zeta, \eta$ be Boolean formulas such that $\bar{\mathcal T} \proves \zeta \implc{s} \eta$ for some $s \in \Reals^+$. Then there is an $r \in \Reals^+$ with the following properties: {\rm (i)} $r$ is the smallest value such that $\bar{\mathcal T} \proves \zeta \implc{r} \eta$ and {\rm (ii)} $r$ is a finite sum of weights of implications in $\mathcal T$.
\end{lemma}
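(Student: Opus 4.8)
The plan is to re-run the argument of Lemma~\ref{lem:smallest-weight}, noting that in that proof finiteness of the theory is used only to guarantee that the weights of its implications form a \emph{finite} set of reals; for the infinite theory $\bar{\mathcal T}$ this role is played instead by condition (F3) of Lemma~\ref{lem:extension-whole}, which ties every weight of $\bar{\mathcal T}$ back to the finitely many weights of $\mathcal T$.

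First I would isolate the following fact from the proof of Lemma~\ref{lem:smallest-weight}, which holds for an \emph{arbitrary} theory $\mathcal S$: if $\mathcal S \proves \zeta \implc{s} \eta$, then there is an $s' \leq s$ with $\mathcal S \proves \zeta \implc{s'} \eta$ such that $s'$ is a finite sum of weights of implications occurring in $\mathcal S$. Indeed, take a proof of $\zeta \implc{s} \eta$ in \pLAE, delete every application of (R3), decrease the weights of all subsequent implications accordingly, and replace (R5) by (R5'); this yields a proof of some $\zeta \implc{s'} \eta$ with $s' \leq s$ in the modified calculus \pLAE' of Lemma~\ref{lem:smallest-weight}, and an inspection of the rules (R1), (R2), (R4), (R5'), (R6) shows that $s'$ is built up as a finite sum of weights of implications of $\mathcal S$ (an application of (R5') only selecting one of two such sums). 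Expanding each (R5') back into (R3) followed by (R5) turns this into a \pLAE-proof of $\zeta \implc{s'} \eta$ from $\mathcal S$. Finiteness of $\mathcal S$ is not needed for any of this.

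Applying this fact to $\mathcal S = \bar{\mathcal T}$ and then invoking (F3), from $\bar{\mathcal T} \proves \zeta \implc{s} \eta$ we obtain some $s' \leq s$ with $\bar{\mathcal T} \proves \zeta \implc{s'} \eta$ such that $s'$ is a finite sum of weights of implications in $\mathcal T$. Let $W$ be the finite set of weights of implications in $\mathcal T$, and put
\[ V \;=\; \{\, t \colon t \text{ is a finite sum of elements of } W \text{ with } t \leq s \,\}, \qquad V' \;=\; \{\, t \in V \colon \bar{\mathcal T} \proves \zeta \implc{t} \eta \,\}. \]
Since $\mathcal T$ is finite and only the strictly positive members $w$ of $W$ contribute to such sums, each occurring at most $\lfloor s / w \rfloor$ times in a sum bounded by $s$, the set $V$ is finite; hence $V'$ is finite as well, and $V'$ is non-empty since $s' \in V'$. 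Let $r = \min V'$. Then $\bar{\mathcal T} \proves \zeta \implc{r} \eta$ and $r$ is a finite sum of weights of implications in $\mathcal T$, which is~(ii). For~(i), let $t \in \Reals^+$ with $\bar{\mathcal T} \proves \zeta \implc{t} \eta$; if $t > s$ then $t > r$ trivially, and otherwise the fact of the previous paragraph, applied with $t$ in place of $s$, together with (F3) yields a $t' \leq t$ with $\bar{\mathcal T} \proves \zeta \implc{t'} \eta$ and $t' \in V'$, whence $r \leq t' \leq t$. Thus $r$ is the least weight for which $\zeta \implc{r} \eta$ is provable from $\bar{\mathcal T}$.

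The only step requiring a little care is the verification in the second paragraph that the ``finite sum of weights'' conclusion of Lemma~\ref{lem:smallest-weight} carries over to the infinite theory $\bar{\mathcal T}$; once this is combined with (F3), the remaining minimisation argument is entirely routine.
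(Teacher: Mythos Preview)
Your proof is correct and follows essentially the same approach as the paper: invoke (F3) so that every weight arising in a proof from $\bar{\mathcal T}$ is a finite sum of weights from the finite theory $\mathcal T$, and then rerun the minimisation argument of Lemma~\ref{lem:smallest-weight}. The paper's own proof simply states ``each weight of an implication contained in $\bar{\mathcal T}$ is a sum of weights of implications contained in the finite theory $\mathcal T$; hence we can argue as in the proof of Lemma~\ref{lem:smallest-weight}'', whereas you have spelled the details out carefully (in particular the finiteness of the set $V$), but the underlying idea is identical.
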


\begin{proof}
By condition (F3) of Lemma \ref{lem:extension-whole}, each weight of an implication contained in $\bar{\mathcal T}$ is a sum of weights of implications contained in the finite theory $\mathcal T$. Hence we can argue as in the proof of Lemma \ref{lem:smallest-weight}.
\end{proof}

We finally arrive at our main result.

\begin{theorem} \label{thm:completeness-LAE}
Let $\mathcal T$ be a finite theory and $\Phi$ an implication. Then $\mathcal T$ proves $\Phi$ in \pLAE{} if and only if $\mathcal T$ semantically entails $\Phi$ in \LAE.
\end{theorem}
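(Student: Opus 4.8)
The ``only if'' direction is Proposition \ref{prop:soundness}, so I would concentrate on completeness and argue contrapositively: assuming $\mathcal T \doesnotprove \Phi$, I would build a model and an evaluation satisfying $\mathcal T$ but not $\Phi$. Write $\Phi = \zeta \implc{r} \eta$. After adjoining to $\mathcal T$ the derivable (hence harmless) implications $\phi \implc{0} \phi$ for the finitely many variables $\phi$ of $\Phi$ not yet occurring in $\mathcal T$, and after replacing $\mathcal T$ by an equivalent basic theory, I may assume $\mathcal T$ is basic and that all variables of $\zeta$ and $\eta$ occur in $\mathcal T$. Applying Lemma \ref{lem:extension-whole} I pass to a basic theory $\bar{\mathcal T} \supseteq \mathcal T$ with the disjunction property (F1), conservativity over the variables of $\mathcal T$ (F2), and the weight condition (F3). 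By (F2), $\bar{\mathcal T} \doesnotprove \zeta \implc{r} \eta$, and by Lemma \ref{lem:smallest-weight-whole} the value $\rho = \min \{ d \in \Reals^+ \colon \bar{\mathcal T} \proves \zeta \implc{d} \eta \}$ exists (set $\rho = \infty$ if there is no such $d$); necessarily $\rho > r$, since otherwise (R3) would give $\bar{\mathcal T} \proves \zeta \implc{r} \eta$.

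The central construction is a quasimetric Boolean algebra extracted from $\bar{\mathcal T}$. Let $\mathcal B$ be the Lindenbaum algebra of classical propositional logic on the (countably many) variables occurring in $\bar{\mathcal T}$, factored by the ideal $\{ [\gamma]_\boolequ \colon \bar{\mathcal T} \proves \gamma \implc{0} \false \}$; this is a well-defined ideal by (R1), (R5), (R6), so $\mathcal B$ is a separable Boolean algebra. The key auxiliary fact is that $\bar{\mathcal T} \proves \alpha \implc{0} \beta$ iff $\bar{\mathcal T} \proves \alpha \land \lnot\beta \implc{0} \false$: the forward direction uses (R2) with $\gamma = \lnot\beta$ together with (R1) and (R6), the backward one splits $\alpha \boolequ (\alpha \land \beta) \lor (\alpha \land \lnot\beta)$ and uses (R5), (R6). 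Consequently $[\alpha] \leq [\beta]$ in $\mathcal B$ exactly when $\bar{\mathcal T} \proves \alpha \implc{0} \beta$. I then define $p([\alpha],[\beta]) = \min \{ d \in \Reals^+ \colon \bar{\mathcal T} \proves \alpha \implc{d} \beta \}$, with value $\infty$ if there is no such $d$; this is independent of representatives by (R6), and the minimum exists by Lemma \ref{lem:smallest-weight-whole}. Now QB1 is the equivalence just noted; QB2 follows from (R5); QB3 from (R1) and (R6); QB4 from (R6); QB5 is exactly the disjunction property (F1), taking $A_B = [\alpha_\beta]$ and $A_C = [\alpha_\gamma]$ from the witnesses of (F1) and checking $[\alpha] = [\alpha_\beta \lor \alpha_\gamma]$ via (R5); and QB6 holds because, by (F3), every finite value of $p$ lies in the additive sub-semigroup of $\ExtReals$ generated by the finitely many weights of $\mathcal T$, which is locally finite and hence well-ordered, so a sequence $p(A_i,B)$ with infimum $0$ must attain $0$.

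With $(\mathcal B,p)$ verified I would invoke Theorem \ref{thm:metricBA-representation} to obtain a quasimetric space $(W,q)$ and an injective Boolean homomorphism $\iota$ with $p(A,B) = q(\iota(A),\iota(B))$. Let $\mathcal M$ be the smallest algebra of subsets of $W$ containing $\iota(\mathcal B)$ and closed under all neighbourhood operators $U_d$; this is a model for \LAE. Define the evaluation $v$ by $v(\phi) = \iota([\phi])$ on variables, so that $v(\alpha) = \iota([\alpha])$ for every Boolean formula $\alpha$ over the variables of $\bar{\mathcal T}$. For each $\alpha \implc{d} \beta$ in $\mathcal T \subseteq \bar{\mathcal T}$ we have $\bar{\mathcal T} \proves \alpha \implc{d} \beta$, hence $q(\iota([\alpha]),\iota([\beta])) = p([\alpha],[\beta]) \leq d$, i.e.\ $v(\alpha) \subseteq U_d(v(\beta))$ by Lemma \ref{lem:quasimetric-space-2}(ii); thus $v$ satisfies $\mathcal T$. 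On the other hand $q(\iota([\zeta]),\iota([\eta])) = p([\zeta],[\eta]) = \rho > r$, so $v(\zeta) \not\subseteq U_r(v(\eta))$ and $v$ does not satisfy $\Phi$. Hence $\mathcal T$ does not semantically entail $\Phi$, which is the required contrapositive.

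The step I expect to be the main obstacle is getting the quasimetric Boolean algebra exactly right. Verifying QB1 is delicate because it depends on the fine behaviour of the degree-$0$ fragment, that is, on the interplay of (R1), (R2), (R6); and QB5 is the reason the whole detour through Lemmas \ref{lem:extension-single}--\ref{lem:extension-whole} — and with it the machinery of proof forests together with the normalisation lemmas about them (Lemmas \ref{lem:widerspruch-at-leaves-only}--\ref{missing-literal-not-in-tob-proof}) — is needed: the required witnesses $\alpha_\beta, \alpha_\gamma$ for every provable disjunctive implication do not exist in $\mathcal T$ itself and must be manufactured conservatively. The finiteness hypothesis on $\mathcal T$ enters only through Lemma \ref{lem:smallest-weight-whole}, which is what makes $p$ well-defined (the minimum exists) and what yields QB6; this is precisely why the completeness result is confined to finite theories.
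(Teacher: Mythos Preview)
Your proposal is correct and follows essentially the same route as the paper: pass to the extension $\bar{\mathcal T}$ of Lemma~\ref{lem:extension-whole}, build a quasimetric Boolean algebra from provability in $\bar{\mathcal T}$, verify (QB1)--(QB6), and invoke Theorem~\ref{thm:metricBA-representation} together with Lemma~\ref{lem:quasimetric-space-2}(ii) to obtain the countermodel. The only difference is cosmetic: you realise $\mathcal B$ as the Lindenbaum algebra modulo the ideal $\{[\gamma]_{\boolequ} : \bar{\mathcal T} \proves \gamma \implc{0} \false\}$ and then establish $[\alpha]\leq[\beta] \Leftrightarrow \bar{\mathcal T}\proves \alpha\implc{0}\beta$ via your auxiliary equivalence, whereas the paper takes the provability relation $\alpha\Tleq\beta \Leftrightarrow \bar{\mathcal T}\proves \alpha\implc{0}\beta$ directly as the preorder and simply asserts that the associated quotient is a Boolean algebra; the two descriptions give the same $\mathcal B$, and your pre-processing (adjoining $\phi\implc{0}\phi$ and passing to a basic theory) just makes explicit what the paper leaves tacit.
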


\begin{proof}
The ``only if'' part holds by Proposition \ref{prop:soundness}.

To see the ``if'' part, assume that $\mathcal T$ does not prove the implication $\zeta \implc{r} \eta$. Let $\bar{\mathcal T}$ the extension of $\mathcal T$ according to Lemma \ref{lem:extension-whole}. By (F2), then also $\bar{\mathcal T}$ does not prove $\zeta \implc{r} \eta$.

For Boolean formulas $\alpha$ and $\beta$ built up from variables occurring in $\bar{\mathcal T}$, we put now $\alpha \Tleq \beta$ if $\bar{\mathcal T} \proves \alpha \implc{0} \beta$, and we put $\alpha \Tequ \beta$ if $\alpha \Tleq \beta$ and $\beta \Tleq \alpha$. Then $\Tequ$ is an equivalence relation inducing a Boolean algebra $\mathcal B$. Note that $\mathcal B$ is separable because the number of variables is countable.

We denote the $\Tequ$-class of a Boolean formula $\alpha$ by $\Tequcl{\alpha}$. For each pair $\Tequcl{\alpha}, \Tequcl{\beta} \in {\mathcal B}$, we put
\begin{equation} \label{fml:quasimetric-on-BA}
p(\Tequcl{\alpha}, \Tequcl{\beta}) \;=\; \min\; 
  \{ t \in \ExtReals \colon \bar{\mathcal T} \proves \alpha \implc{t} \beta \},
\end{equation}
the existence of the minimum being ensured by Lemma \ref{lem:smallest-weight-whole}. We shall write in the sequel simply ``$p(\alpha, \beta)$'' instead of ``$p(\Tequcl{\alpha}, \Tequcl{\beta})$''. For any pair of Boolean formulas $\alpha, \beta$ and $d \in \Reals^+$, we have by (\ref{fml:quasimetric-on-BA})
\begin{equation} \label{fml:d}
\bar{\mathcal T} \proves \alpha \implc{d} \beta
\quad\text{if and only if}\quad
p(\alpha, \beta) \leq d.
\end{equation}

We have to show that $p$ fulfils the conditions (QB1)--(QB6). By Lemmas \ref{thm:metricBA-representation} and \ref{lem:quasimetric-space-2}(ii), it will then follow that there is a model such that all elements of $\bar{\mathcal T}$, thus in particular all elements of $\mathcal T$, are satisfied, but $\zeta \implc{r} \eta$ is not.

(QB1): By (\ref{fml:d}), we have $p(\alpha, \beta) = 0$ iff $\bar{\mathcal T} \proves \alpha \implc{0} \beta$ iff $\Tequcl{\alpha} \leq \Tequcl{\beta}$.

(QB2): For any $d \in \Reals^+$, we have $p(\alpha \lor \beta, \gamma) \leq d$ iff $\bar{\mathcal T} \proves \alpha \lor \beta \implc{d} \gamma$ iff $\bar{\mathcal T} \proves \alpha \implc{d} \gamma$ and $\bar{\mathcal T} \proves \beta \implc{d} \gamma$ iff $p(\alpha, \beta) \vee p(\alpha,\gamma) \leq d$.

(QB3): Assume that $\Tequcl{\gamma} \leq \Tequcl{\beta}$. Then $\bar{\mathcal T} \proves \gamma \implc{0} \beta$ and hence $p(\alpha, \beta) \leq p(\alpha, \gamma)$.

(QB4): Let $p(\alpha, \beta) \leq d$ and $p(\beta, \gamma) \leq e$. Then $\bar{\mathcal T} \implc{d+e} \gamma$ and thus $p(\alpha, \gamma) \leq d + e$. It follows $p(\alpha, \gamma) \leq p(\alpha, \beta) + p(\beta, \gamma)$.

(QB5): Assume that $p(\alpha, \beta \lor \gamma) \leq d$. Then $\bar{\mathcal T} \proves \alpha \implc{d} \beta \lor \gamma$, and by condition (F1) of Lemma \ref{lem:extension-whole}, there are Boolean formulas $\alpha_\beta, \alpha_\gamma$ such that $\alpha \Tequ \alpha_\beta \lor \alpha_\gamma$ and $\bar{\mathcal T} \proves \alpha_\beta \implc{d} \beta, \; \alpha_\gamma \implc{d} \gamma$, that is, $p(\alpha_\beta, \beta), \; p(\alpha_\gamma, \gamma) \leq d$. We conclude that $\Tequcl{\alpha_\beta}, \Tequcl{\alpha_\gamma} \in {\mathcal B}$ are such that $\Tequcl{\alpha} = \Tequcl{\alpha_\beta} \vee \Tequcl{\alpha_\gamma}$ and $p(\alpha_\beta, \beta) \vee p(\alpha_\gamma, \gamma) \leq p(\alpha, \beta \lor \gamma)$.

To complete the proof of (QB5), we have to show that the last inequality is in fact an equality. Let $e = p(\alpha_\beta, \beta) \vee p(\alpha_\gamma, \gamma)$; then $p(\alpha_\beta, \beta) \leq e$ and $p(\alpha_\gamma, \gamma) \leq e$ implies $\bar{\mathcal T} \proves \alpha_\beta \implc{e} \beta$ and $\bar{\mathcal T} \proves \alpha_\gamma \implc{e} \gamma$, hence $\bar{\mathcal T} \proves \alpha_\beta \lor \alpha_\gamma \implc{e} \beta \lor \gamma$ and $\bar{\mathcal T} \proves \alpha \implc{e} \beta \lor \gamma$, that is, $p(\alpha, \beta \lor \gamma) \leq e$.

(QB6): Let $\Tequcl{\alpha_0}, \Tequcl{\alpha_1}, \ldots, \Tequcl{\beta} \in {\mathcal B}$ be such that $\inf_i p(\alpha_i, \beta) = 0$. By Lemma \ref{lem:smallest-weight-whole}, the smallest non-zero element in the image of $p$ is the smallest among the non-zero weights of the implications in $\mathcal T$. Since $\mathcal T$ is finite, we conclude that there is a $j$ such that $p(\alpha_j, \beta) = 0$. By (QB1), we have $\Tequcl{\alpha_j} \leq \Tequcl{\beta}$.
\end{proof}

A final note concerns the notion of a min-space, which was mentioned in the Section \ref{sec:LAE}. The countermodel constructed in the proof of Theorem \ref{thm:completeness-LAE} is in fact a min-space. The quasimetric $p$, defined by (\ref{fml:quasimetric-on-BA}), has by Lemma \ref {lem:smallest-weight-whole} the property that, given any $u > 0$, the image of $p$ contains only finitely many values smaller than $u$. The image of the quasimetric $q$, defined by (\ref{fml:q-1}) and (\ref{fml:q-2}), is consequently contained in the image of $p$; hence $q$ has the same property. We conclude that Theorem \ref{thm:completeness-LAE} still holds if we modify the definition of \LAE{} restricting the models to min-spaces.

\section{Conclusion}
\label{sec:Conclusion}

The logic of approximate entailment, which goes back to E.\ Ruspini \cite{Rus}, is based on an idea that is as simple as convincing. The aim is to make precise what it means for a property $\alpha$ to imply another property $\beta$ approximately. Assume that $\alpha$ and $\beta$ correspond to subsets $A$ and $B$, respectively, of a set of worlds $W$. Following \cite{Rus}, all what we need to add is a {\it distance function} on $W$, that is, a map $d$ assigning a value to pairs of worlds that reasonably measures their distinctness. Then $\alpha$ is considered to imply $\beta$ {\it to the degree of imprecision} $d$ if $A$ is a subset of the $d$-neighbourhood of $B$.

To associate with this idea a logic on semantic grounds is straightforward; to axiomatise this logic, however, turns out to be difficult. Those few sound rules that are listed in Definition \ref{def:pLAE} are easily found; to continue on this basis is a challenge. Our question has been if this small set of rules is not already sufficient to axiomatise a logic of the mentioned kind. We have given an affirmative answer for the case where the models are quasimetric spaces.

We may certainly say that the road to this goal was stony. It was necessary to develop the basis of a proof theory for the proposed calculus and to prove quite an amount of auxiliary lemmas. Future efforts in this area might well aim at different techniques, so that the proof of our completeness theorem could be shortened.

The present work gives rise to a considerable amount of open problems. First to mention, our completeness theorem covers finite theories; the case of infinite theories is open. We note that we may hardly proceed along the same lines as we did here; our procedure heavily relies on the finiteness assumption.

Second, we have introduced in \cite{EGRV} a counterpart to the logic considered here, called the Logic of Strong Entailment, or \LSE{} for short. Whereas in \LAE{} implicational relationships holding only approximately are considered, \LSE{} not only requires implications to hold exactly, but even to be invariant under quantified changes. We wonder if a proof calculus along the present lines could be found for \LSE{} as well.

Another question might not lead to immediate results, but should be explored nonetheless. The models constructed in the present context are, from the point of view of applications, somewhat unnatural; the image of the distance function contains only finitely many values below any given $m \in \Reals^+$. We ask if there is a logic of approximate entailment that corresponds more closely to the common situation that the set of world is (a subset of) $\Reals^n$, endowed with the Euclidean metric.

Speaking about the Euclidean metric, we are finally led to the probably most significant open problem in the present context. What we have in mind is the symmetry of the distance function. Does our completeness theorem still hold if we restrict to metric spaces, that is, those spaces that are based on a symmetric quasimetric? We conjecture that the answer is positive. We guess that proof-theoretical methods, however, are not suitable to deal with this case. We propose to examine instead the question if quasimetric spaces can be suitably embedded in metric spaces, such that the quasimetric of the former corresponds to the Hausdorff quasimetric of the latter space. A result of this kind is contained in \cite{Vit}; unfortunately, the proposed representation seems not to be applicable in the present context and an independent approach needs to be found.

\subsubsection*{Acknowledgements} 
The author acknowledges the support of the Austrian Science Fund (FWF): project I 1923-N25 (New perspectives on residuated posets).

He would moreover like to thank the anonymous reviewers for their constructive criticism, which led to an improvement of this paper.

\end{document}